\definecolor{darkgray}{RGB}{64,64,64}
\definecolor{litegray}{RGB}{192,192,192}
\definecolor{green}{RGB}{34,116,66}
\definecolor{red}{RGB}{232,102,116}
\tikzstyle{vertex}=[circle, draw=white, fill=red, inner sep=0pt, minimum width=5pt]
\tikzstyle{vtx}=[circle, draw, fill=litegray, inner sep=0pt, minimum width=5pt]
\DeclarePairedDelimiter\abs{\lvert}{\rvert}%
\DeclarePairedDelimiter\ceil{\lceil}{\rceil}%
\DeclarePairedDelimiter\floor{\lfloor}{\rfloor}%
\newtheorem{theorem}{Theorem}[section]
\newtheorem{lemma}[theorem]{Lemma}
\newtheorem{proposition}[theorem]{Proposition}
\newtheorem{corollary}[theorem]{Corollary}
\theoremstyle{definition}
\newtheorem{definition}[theorem]{Definition}
\theoremstyle{remark}
\newtheorem*{remark}{Remark}
\newtheorem*{claim*}{Claim}
\newcommand{\EE}{\mathcal{E}}
\newcommand{\PP}{\mathcal{P}}
\newcommand{\PPP}{\bar{\PP}}
\newcommand{\OO}{\mathcal{O}}
\newcommand{\KK}{\mathcal{K}}
\newcommand{\R}{\mathbb{R}}
\newcommand{\F}{\mathbb{F}}
\newcommand{\union}{\cup\,} % add a little space between \cup and euler variation of mathcal
\newcommand{\set}[1]{\left\{#1\right\}}
\newcommand{\dset}[2]{\set{#1\colon #2}}
\newcommand{\rank}[1]{\mathrm{rank}(#1)}
\title{Rainbow odd cycles}
\author{
  Ron Aharoni\footnote{Department of Mathematics, Technion -- Israel Institute of Technology, Technion City, Haifa 3200003, Israel.}${\ }^{,}$\thanks{Email: {\tt ra@tx.technion.ac.il}. Supported in part by the United States--Israel Binational Science Foundation grant no.\ 2006099, the Israel Science Foundation grant no.\ 2023464 and the Discount Bank Chair at the Technion. This paper is part of a project that has received funding from the European Union's Horizon 2020 research and innovation programme under the Marie Sk{\l{}}odowska-Curie grant agreement no.\ 823748.}
  \and
  Joseph Briggs\footnotemark[1]${\ }^{,}$\thanks{Email: {\tt briggs@campus.technion.ac.il}.}
  \and
  Ron Holzman\footnotemark[1]${\ }^{,}$\thanks{Email: {\tt holzman@technion.ac.il}. Research partly done during a visit at the Department of Mathematics, Princeton University, supported by the H2020-MSCA-RISE project CoSP--GA no.\ 823748.}
  \and
  Zilin Jiang\thanks{School of Mathematical and Statistical Sciences, and
  School of Computing and Augmented Intelligence, Arizona State University, Tempe, AZ 85281, USA. Email: {\tt zilinj@asu.edu}. The work was done when Z.~Jiang was an Applied Mathematics Instructor at Massachusetts Institute of Technology, and was supported in part by an AMS--Simons Travel Grant, and by U.S. taxpayers through the NSF grant DMS-1953946.}
}
\date{}
\begin{document}

\maketitle

\begin{abstract}
  We prove that every family of (not necessarily distinct) odd cycles $O_1, \dots, O_{2\lceil n/2 \rceil-1}$ in the complete graph $K_n$ on $n$ vertices has a rainbow odd cycle (that is, a set of edges from distinct $O_i$'s, forming an odd cycle). As part of the proof, we characterize those families of $n$ odd cycles in $K_{n+1}$ that do not have any rainbow odd cycle. We also characterize those families of $n$ cycles in $K_{n+1}$, as well as those of $n$ edge-disjoint nonempty subgraphs of $K_{n+1}$, without any rainbow cycle.
\end{abstract}

\section{Introduction}\label{sec:introduction}

Given a family $\EE$ of sets, an $\EE$-\emph{rainbow set} is a set $R \subseteq \union\EE$ with an injection $\sigma\colon R \to \EE$ such that $e \in \sigma(e)$ for all $e \in R$. The term rainbow set originates in viewing every member of $\EE$ as a color, and every $e \in R$ as colored by $\sigma(e)$. When we speak of a rainbow set, we often keep in mind the injection $\sigma$, and we say that $\sigma(e) \in \EE$ is \emph{represented} by $e$ in $R$.

\begin{remark}
  Throughout we use the term ``family'' in the sense of ``multiset'' allowing repeated members.
\end{remark}

A recurring theme in the study of rainbow sets is finding an $\EE$-rainbow set satisfying a property $\PP$, assuming that every member of $\EE$ satisfies $\PP$, and that $\EE$ is large. A classic result of this type is  B\'ar\'any's colorful Carath\'eodory theorem~\cite{B82}: every family of $n+1$ subsets of $\R^n$, each containing a point $a$ in its convex hull, has a rainbow set satisfying the same property. An application mentioned in \cite{B82} is a theorem due to Frank and Lov\'asz, on rainbow directed cycles. Other results of this type are about rainbow matchings. For example, improving a theorem of Drisko~\cite{D98}, Aharoni and Berger~\cite[Theorem~4.1]{AB} proved that $2n-1$ matchings of size $n$ in any bipartite graph have a rainbow matching of size $n$. In \cite{AKZ} the examples showing sharpness of this result were characterized, and in \cite{ARJ} the theorem was given a topological proof. A more general context is that of independent sets in graphs, see, e.g., \cite{ABKK,KL,KKK}.

In this paper we study conditions for the existence of rainbow cycles, with or without a parity constraint on their lengths. Hereafter a cycle is viewed as a set of edges. Our main result is:
\begin{theorem}\label{thm:odd-cycles}
  Every family of $2\ceil{n/2}-1$ odd cycles in the complete graph $K_n$ on $n$ vertices has a rainbow odd cycle.
\end{theorem}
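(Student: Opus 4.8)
The plan is to recast the goal as producing a rainbow \emph{non-bipartite} subgraph. If a rainbow edge set $R$, witnessed by an injection $\sigma$, contains an odd cycle $C$, then $C$ together with $\sigma|_{C}$ is already a rainbow odd cycle. So it suffices to find distinct cycles $O_{i_1},\dots,O_{i_k}$ of the family and representing edges $e_j\in O_{i_j}$ whose union is non-bipartite; assume toward a contradiction that every choice of edges with distinct representatives spans a bipartite subgraph of $K_n$.

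The core of the argument I have in mind uses a \emph{maximum rainbow forest} $F$ --- a forest in $K_n$ whose edges can be injectively labeled by cycles of the family, with $\abs{F}$ as large as possible --- together with a proper $2$-coloring of each of its components. Write $m=2\ceil{n/2}-1$ for the number of cycles and suppose $\abs{F}<m$, so that some cycle $O_j$ is unused. For $e\in O_j\setminus F$: if $e$ joins two components of $F$ then $F+e$ is a larger rainbow forest, contradicting maximality; otherwise $e$ lies inside one component, $F+e$ is a rainbow (hence, by assumption, bipartite) subgraph, so the unique cycle of $F+e$ is even and $e$ is bichromatic. Edges of $O_j$ lying in $F$ are bichromatic too, and since $O_j$ is connected it sits inside one component of $F$ with all edges bichromatic, which is impossible for an odd cycle. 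Hence $\abs{F}=m$, and since $F$ is a forest on $n$ vertices, $m\le n-1$.

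If $n$ is odd this is already a contradiction, because then $m=n$; so the theorem holds for odd $n$. If $n$ is even, then $m=n-1$ and $F$ is forced to be a spanning tree $T$ with edges $f_1,\dots,f_{n-1}$ representing $O_1,\dots,O_{n-1}$ respectively; fix the proper $2$-coloring $(X,Y)$ of $T$. Each odd cycle $O_i$ has an odd, hence positive, number of monochromatic edges, none of which lies in $T$, and the fundamental cycle in $T$ of any monochromatic edge has odd length. If the fundamental cycle of some monochromatic edge $g$ of some $O_i$ avoids $f_i$, we are done: it is a rainbow odd cycle, using $g\in O_i$ and tree edges from distinct cycles $O_j$ with $j\neq i$. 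So we are left with the situation in which, for every $i$, the edge $f_i$ separates the two endpoints of \emph{every} monochromatic edge of $O_i$.

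Closing this last case is, I expect, the main difficulty. The extra feature available for even $n$ is that $K_n$ has no Hamiltonian odd cycle; the plan is to combine the separation constraints above with a Hall-type / deficiency analysis of how the monochromatic edges and the tree edges are shared among the $O_i$'s, so as to exhibit a fundamental (odd) cycle that still admits a system of distinct representatives despite running through a ``forbidden'' tree edge, and to show that the only alternative is incompatible with having $n-1$ genuine odd cycles when $n$ is even. I anticipate that the same analysis carried out in $K_{n+1}$ (an odd number of vertices) does not yield a contradiction but instead forces the family to look essentially like $n$ copies of a Hamiltonian odd cycle or a small number of structured variants of it, and that bookkeeping of exactly when the inequalities above are met with equality is what produces the stated characterizations of the $n$-cycle families in $K_{n+1}$ --- with the parity constraint, without it, and for edge-disjoint nonempty subgraphs --- mentioned in the abstract.
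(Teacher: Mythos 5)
Your first two paragraphs are correct and coincide with the paper's own treatment of the easy half: the maximal rainbow forest argument, the observation that an unrepresented odd cycle would have to live inside one component with all its edges bichromatic, and the conclusion that a family of $n$ odd cycles in $K_n$ ($n$ odd) must have a rainbow odd cycle. Your setup for the even case is also sound as far as it goes: with respect to the proper $2$-coloring of the rainbow spanning tree $T$, every $O_i$ has an odd (hence positive) number of monochromatic edges, each such edge closes an odd fundamental cycle, and that cycle is rainbow unless it passes through the tree edge $f_i$ representing $O_i$. All of these claims check out.

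The problem is that everything after that point is a declared intention rather than an argument, and it is precisely the part that carries the theorem. You are left with the configuration in which, for every $i$, the edge $f_i$ separates in $T$ the endpoints of \emph{every} monochromatic edge of $O_i$, and you propose to finish by an unspecified ``Hall-type / deficiency analysis''; no such analysis is given, and there is no indication of what quantity is to be counted or why the even parity of $n$ enters. This is a genuine gap, not a routine verification: the paper does not close the even case along these lines at all. Instead it proves a full structural characterization (\cref{thm:odd-cycles-char}) of families of $n$ odd cycles in $K_{n+1}$ with no rainbow odd cycle --- they must be pruned cacti --- by an induction whose step requires the technical contraction lemma (\cref{lem:occ}), a consequence of Rado's theorem for building rainbow spanning trees (\cref{lem:rainbow-spanning-tree}), and a three-case analysis; the even case of \cref{thm:odd-cycles} then follows from the parity observation that a pruned cactus of odd cycles has even cardinality, so $n-1$ odd cycles in $K_n$ ($n$ even) cannot form one. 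Your closing remark that the extremal families in $K_{n+1}$ should be ``essentially $n$ copies of a Hamiltonian odd cycle or structured variants'' is the right intuition about where the proof must go, but intuition about the extremal examples is not a substitute for the inductive structural argument, which is the bulk of the paper's Section~2. As it stands, the proposal proves the theorem only for odd $n$.
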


Put more explicitly, the theorem states that when $n$ is odd, every family of $n$ odd cycles in $K_n$ has a rainbow odd cycle; when $n$ is even $n-1$ odd cycles suffice. The case of $n$ odd is relatively easy, and the main effort goes into the even case. The proof is done in \cref{sec:roc} via a characterization of families of $n - 1$ odd cycles in $K_n$ without any rainbow odd cycle. In particular, when $n$ is even, $n-1$ odd cycles in $K_n$ cannot form the characterized family.

In \cref{sec:rgc} we deal with rainbow cycles of general length. The fact that $n$ cycles in $K_n$ have a rainbow cycle is easy, and the main result is a characterization of families of $n$ cycles in $K_{n+1}$ without any rainbow cycle. In \cref{sec:rainbow-cycles-in-multigraphs}, we consider rainbow cycles in edge-disjoint families; our result in this case turns out to be a rediscovery, with a short proof, of a theorem of \cite{HHJO}. In \cref{sec:remarks} we conclude with a generalization to matroids, and a result on rainbow even cycles.

\section{Rainbow odd cycles}\label{sec:roc}

We start with an observation which yields \cref{thm:odd-cycles} in the case of $n$ odd.

\begin{proposition}\label{thm:woc}
  Every family of $n$ odd cycles in $K_n$ has a rainbow odd cycle.\footnote{A reworded version of \cref{thm:woc}, suggested by the first author, appeared as Problem 3 of Day 1 in the 12th Romanian Master in Mathematics, RMM 2020.}
\end{proposition}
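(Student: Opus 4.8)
The plan is to argue by contradiction, building a \emph{maximal rainbow forest} and then using the oddness of one of the cycles to close it into a rainbow odd cycle. Suppose $O_1, \dots, O_n$ are odd cycles in $K_n$ admitting no rainbow odd cycle. Consider all pairs $(F,\sigma)$ in which $F \subseteq E(K_n)$ is a forest and $\sigma\colon F \to [n]$ is an injection with $e \in O_{\sigma(e)}$ for every $e \in F$; these form a finite nonempty collection (the empty forest qualifies), so I may pick one that is maximal, in the sense that no edge can be added to $F$ under a new color. Since a forest on at most $n$ vertices has at most $n-1$ edges, while there are $n$ colors, some color $c$ is not used by $\sigma$.

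Next I would use maximality to confine the whole cycle $O_c$ to one component of $F$. If some edge $e = uv \in O_c$ had $u$ and $v$ in different components of $F$, then $e \notin F$ and $F + e$ would still be a forest, so adjoining $e$ with color $c$ would give a strictly larger rainbow forest, a contradiction. Hence every edge of $O_c$ has both endpoints in a single component of $F$, and because $O_c$ is connected, all of $V(O_c)$ lies in one tree $T$ of $F$. (Note $O_c$ may well share some edges with $F$; that is harmless.)

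Then comes the parity step. Let $(X,Y)$ be the bipartition of the tree $T$. Since $O_c$ is an odd cycle it cannot be properly $2$-colored, so it has an edge $e^\ast = xy$ with $x$ and $y$ on the same side of $(X,Y)$; moreover $e^\ast \notin F$, because every edge of the tree $T$ crosses the bipartition. Adjoining $e^\ast$ to $F$ creates a unique cycle $C$, consisting of $e^\ast$ together with the $T$-path from $x$ to $y$; that path has even length because $x$ and $y$ lie on the same side, so $C$ has odd length. Finally, $C$ is rainbow: the path edges lie in $F$ and thus carry distinct colors under $\sigma$, all different from $c$, so coloring $e^\ast$ by $c$ exhibits $C$ as a rainbow odd cycle, contradicting the assumption.

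The step I would watch most carefully is the interface between maximality and parity: one must be sure that maximality pins down \emph{all} of $O_c$ inside a single tree $T$ (not merely some of its edges), and that the same-side edge handed to us by oddness is automatically a non-tree edge, so that it genuinely closes a fundamental cycle. Once those two points are secured, the rest is bookkeeping, and the inequality $|F| \le n-1$ against $n$ colors is exactly what makes $n$ cycles suffice --- and makes clear why $n-1$ would not.
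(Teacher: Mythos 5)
Your proposal is correct and is essentially identical to the paper's proof: a maximal rainbow forest, an unrepresented cycle trapped in one tree component by maximality, and an edge of that odd cycle violating the tree's bipartition closing an odd rainbow cycle. The extra care you take (that the same-side edge is automatically a non-tree edge, and that the fundamental cycle's tree-path has even length) just makes explicit what the paper leaves implicit.
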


\begin{proof}
  Let $R$ be a maximal rainbow forest. Since $R$ has fewer than $n$ edges, one of the odd cycles, say $O$, is not represented in $R$. By the maximality of $R$, no edge in $O$ connects two components of $R$. Thus $O$ is contained in a connected component $T$ of $R$. Since $O$ is of odd length, one of its edges does not obey the bipartition of $T$. Adding that edge to $T$ yields a rainbow subgraph that supports an odd cycle.
\end{proof}

A Hamiltonian cycle on $n$ vertices repeated $n-1$ times shows the sharpness of \cref{thm:woc} only for $n$ odd. This example can be generalized as follows.

\begin{definition}
  A family $\OO$ of cycles is a \emph{pruned cactus} if all the cycles in $\OO$ are identical to a fixed cycle on $\abs{\OO} + 1$ vertices, or $\OO$ can be partitioned into two pruned cacti $\OO_1, \OO_2$ such that $\union\OO_1$ and $\union\OO_2$ share exactly one vertex.\footnote{A cactus graph is a connected graph in which two cycles have at most one vertex in common. A pruned cactus $\OO$ is named after the fact that $\union\OO$ is a $2$-edge-connected cactus graph.}
\end{definition}

\begin{figure}
  \centering
  \begin{tikzpicture}[scale=0.02, very thick]
    \coordinate (a1) at (135,-342);
    \coordinate (a2) at (162,-324);
    \coordinate (a5) at (153,-297);
    \coordinate (a6) at (153,-234);
    \coordinate (a9) at (108,-315);
    \coordinate (a10) at (126,-279);
    \coordinate (a11) at (108,-234);
    \coordinate (a15) at (126,-198);
    \coordinate (a16) at (189,-171);
    \coordinate (a17) at (207,-135);
    \coordinate (a20) at (180,-135);
    \coordinate (a21) at (189,-108);
    \coordinate (a22) at (180,-81 );
    \coordinate (a23) at (207,-72 );
    \coordinate (a24) at (198,-54 );
    \coordinate (a28) at (99 ,-162);
    \coordinate (a29) at (135,-162);
    \coordinate (a30) at (117,-126);
    \coordinate (a31) at (135,-90 );
    \coordinate (a32) at (126,-54 );
    \coordinate (a33) at (135,-27 );
    \coordinate (a35) at (117,-9  );
    \coordinate (a36) at (99 ,-54 );
    \coordinate (a39) at (45 ,-90 );
    \coordinate (a40) at (27 ,-117);
    \coordinate (a41) at (63 ,-108);
    \coordinate (a42) at (45 ,-144);
    \coordinate (a43) at (36 ,-180);

    \draw[green] (a1) -- (a2) -- (a5) -- (a6) -- (a15) -- (a11) -- (a10) -- (a9) -- cycle;
    \draw[green] (a15) -- (a29) -- (a30) -- (a28) -- cycle;
    \draw[green] (a30) -- (a31) -- (a32) -- (a33) -- (a35) -- (a36) -- cycle;
    \draw[green] (a15) -- (a16) -- (a20) -- cycle;
    \draw[green] (a20) -- (a17) -- (a21) -- cycle;
    \draw[green] (a21) -- (a23) -- (a24) -- (a22) -- cycle;
    \draw[green] (a28) -- (a42) -- (a43) -- cycle;
    \draw[green] (a42) -- (a41) -- (a39) -- (a40) -- cycle;
    \foreach \i in {1,2,5,6,9,10,11,15,16,17,20,21,22,23,24,28,29,30,31,32,33,35,36,39,40,41,42,43}
      \node[vertex] at (a\i) {};
  \end{tikzpicture}\qquad\qquad%
  \begin{tikzpicture}[scale=0.02, very thick, xscale=-1]
    \coordinate (a1) at (135,-342);
    \coordinate (a2) at (162,-324);
    \coordinate (a5) at (153,-297);
    \coordinate (a6) at (153,-234);
    \coordinate (a9) at (108,-315);
    \coordinate (a10) at (126,-279);
    \coordinate (a11) at (108,-234);
    \coordinate (a15) at (126,-198);
    \coordinate (a16) at (189,-171);
    \coordinate (a17) at (207,-108);
    \coordinate (a20) at (180,-135);
    \coordinate (a21) at (20,-144);
    \coordinate (a22) at (180,-81 );
    \coordinate (a23) at (207,-72 );
    \coordinate (a24) at (198,-54 );
    \coordinate (a28) at (99 ,-162);
    \coordinate (a29) at (135,-162);
    \coordinate (a30) at (117,-126);
    \coordinate (a31) at (135,-90 );
    \coordinate (a32) at (126,-54 );
    \coordinate (a33) at (135,-27 );
    \coordinate (a35) at (117,-9  );
    \coordinate (a36) at (99 ,-54 );
    \coordinate (a39) at (45 ,-90 );
    \coordinate (a40) at (27 ,-117);
    \coordinate (a41) at (63 ,-108);
    \coordinate (a42) at (45 ,-144);
    \coordinate (a43) at (36 ,-180);

    \draw[green] (a1) -- (a5) -- (a6) -- (a15) -- (a11) -- (a10) -- (a9) -- cycle;
    \draw[green] (a15) -- (a29) -- (a31) -- (a32) -- (a33) -- (a35) -- (a36) -- (a30) -- (a28) -- cycle;
    \draw[green] (a15) -- (a16) -- (a20) -- cycle;
    \draw[green] (a20) -- (a17) -- (a23) -- (a24) -- (a22) -- cycle;
    \draw[green] (a28) -- (a42) -- (a43) -- cycle;
    \draw[green] (a42) -- (a41) -- (a39) -- (a40) -- (a21) -- cycle;
    \foreach \i in {1,5,6,9,10,11,15,16,17,20,21,22,23,24,28,29,30,31,32,33,35,36,39,40,41,42,43}
      \node[vertex] at (a\i) {};
  \end{tikzpicture}
  \caption{Underlying graphs of two pruned cacti. The one on the right is composed of odd cycles.}\label{fig:dancers}
\end{figure}
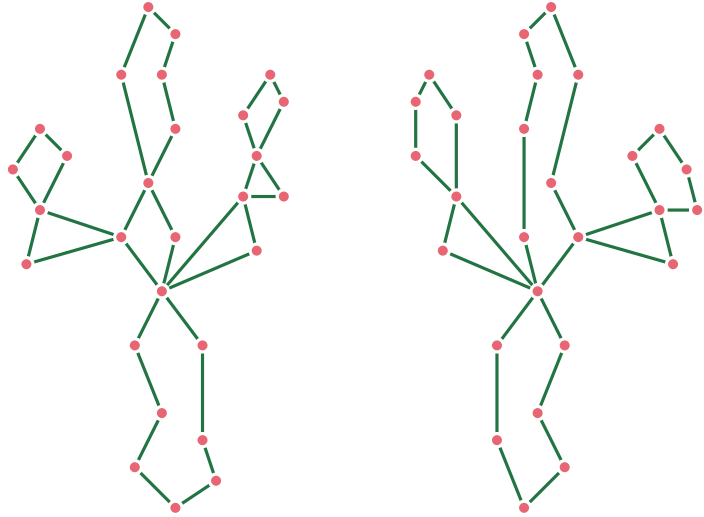

Given a pruned cactus $\OO$, by our recursive definition, one can check that $\OO$ has no rainbow cycle, and the underlying graph $\union\OO$ contains exactly $\abs{\OO} + 1$ vertices (see \cref{fig:dancers}). A key result towards the proof of \cref{thm:odd-cycles} is that the converse is also true for $\OO$ composed of only odd cycles. For technical reasons, we shall switch from now on to cycles in $K_{n+1}$ rather than $K_n$.

\begin{theorem}\label{thm:odd-cycles-char}
  If a family of $n$ odd cycles in $K_{n+1}$ has no rainbow odd cycle, then it is a pruned cactus.
\end{theorem}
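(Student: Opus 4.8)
The natural framework is matroid intersection / rainbow set theory, exploiting the fact that the odd cycles of a graph are the circuits of a matroid. Concretely, on the edge set of $K_{n+1}$ consider the even cycle matroid (also called the factor matroid or the matching matroid of the graph), whose independent sets are the edge sets containing no even cycle; equivalently, a set of edges is independent iff each connected component has at most one cycle, and that cycle is odd. An odd cycle $O_i$ is then a circuit of this matroid, and a rainbow odd cycle is exactly a rainbow circuit. So the hypothesis is that the family $O_1,\dots,O_n$ of circuits has no rainbow circuit, and we must show this forces a pruned cactus structure. First I would set up this matroidal language carefully and record the rank function: for a subgraph $H\subseteq K_{n+1}$, $\rank{H} = |V(H)| - (\text{number of bipartite components of }H)$, where isolated vertices count as bipartite components.

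The main engine is a maximal rainbow independent set $R$ in this matroid, of size $r=\rank{R}$. Since $R$ has no even cycle, each component of $R$ is either a tree or a tree-plus-one-odd-edge (``unicyclic with odd cycle''). If some $O_i$ is unrepresented in $R$, then by maximality $R\cup\{e\}$ has an even cycle for every $e\in O_i$; a short analysis of which edges can be added shows $O_i$ must lie inside one component $T$ of $R$ in a very restricted way, and then one can find a rainbow even cycle unless a rigid local condition holds — this is how one shows $r=n$, i.e. every colour is represented and $\union\{O_i\}$ has exactly $n+1$ vertices (matching the vertex count claimed for a pruned cactus). So the skeleton $R$ is a spanning subgraph with one odd cycle per component, every $O_i$ is represented by a unique edge $e_i\in R$, and $R$ has exactly as many edges as vertices minus the number of its (necessarily odd-unicyclic, hence non-bipartite) components.

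Next I would run an exchange/augmentation argument to pin down the global structure. For each colour $i$, removing $e_i$ from $R$ and adding back a suitable edge of $O_i$ must fail to create a rainbow odd cycle in a new component; tracking these swaps shows that the whole configuration is ``tight'' in the sense that $\union\{O_1,\dots,O_n\}$ is $2$-edge-connected with $n+1$ vertices and $n + (\text{cycle rank})$... more precisely, one proves $\union\{O_i\}$ is a cactus: any two of the $O_i$ share at most one vertex, and the cycles fit together in a tree-like block structure. The cleanest route is induction on $n$: find a ``leaf block'' of the union — a cycle $C$ meeting the rest of the union in a single cut vertex $v$ — argue that the colours whose cycles lie inside $C$ number exactly $|V(C)|-1$ and all equal $C$ (else a rainbow odd cycle appears, using that an odd cycle $C$ of length $\ell$ together with one extra edge not in $C$ but with both endpoints on $C$ always yields either a shorter odd cycle or a pair of edge-disjoint shorter paths giving an odd cycle in fewer colours), delete them, and apply induction to the remaining $n-(|V(C)|-1)$ odd cycles in $K_{|V(C')|}$ where $C'$ is the rest. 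This recovers exactly the recursive definition of pruned cactus.

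\textbf{Main obstacle.} The hard part is the ``rigidity'' step: showing that once $R$ is maximal and some colour is unrepresented (or more generally when a swap is available), one can actually exhibit a rainbow \emph{odd} cycle — odd cycles are a circuit system but \emph{not} the circuits of a matroid in the graph-on-vertices sense I'd like, so the usual matroid-union/rainbow-circuit lemmas don't apply verbatim and parity has to be inserted by hand. In particular, when an extra edge $e$ closes an \emph{even} cycle in $R$, I must combine $e$ with the existing odd edge in that component to produce an odd closed walk and then extract from it an odd cycle whose edges come from distinct $O_i$'s — this requires care because extracting a cycle from a closed walk can lose control of which colours survive. I expect the bulk of the work to be a careful case analysis (on whether the relevant component of $R$ is a tree or odd-unicyclic, and on the position of the endpoints of $e$) establishing that the only way to avoid a rainbow odd cycle is the pruned-cactus gluing, which then feeds the induction.
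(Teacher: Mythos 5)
There is a genuine gap here, and in fact two distinct problems. First, your matroidal setup starts from a false premise: in the even-cycle matroid (independent sets $=$ edge sets in which every component has at most one cycle, and that cycle is odd), an odd cycle is an \emph{independent} set, not a circuit. The circuits of that matroid are the even cycles and the ``handcuffs'' (two odd cycles joined at a vertex or by a path), so a rainbow odd cycle is not a rainbow circuit and the standard rainbow-circuit machinery does not apply. Relatedly, your description of the maximal rainbow independent set $R$ is internally inconsistent: if $R$ has $n$ edges on the $n+1$ vertices and every component of $R$ were odd-unicyclic, it would have as many edges as vertices, i.e.\ $n+1$ edges. Indeed, in the extremal configurations (pruned cacti) the maximal rainbow independent set is simply a spanning tree containing no odd cycle at all, so the ``skeleton with one odd cycle per component'' you plan to exploit does not exist in precisely the cases the theorem is about.

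Second, and more importantly, the entire content of the theorem is concentrated in the step you defer to ``a careful case analysis'': showing that the absence of a rainbow odd cycle forces $\union\OO$ to be a cactus with coinciding cycles in each block. Your leaf-block induction presupposes that cactus structure, which is circular. The actual proof has to split into genuinely different regimes: when some proper subfamily $\KK$ is tight ($\abs{V(\union\KK)}\le\abs{\KK}+1$), one contracts $V(\union\KK)$ and needs a contraction lemma (\cref{lem:occ}) guaranteeing that odd arcs of the remaining cycles survive as odd cycles with no rainbow odd cycle after contraction; when every $O_i$ is Hamiltonian, no tight subfamily exists and one must instead contract a maximum rainbow star and derive a contradiction; and when neither holds, one needs a Hall/Rado-type argument (\cref{lem:rainbow-spanning-tree}) to build a rainbow tree spanning all vertices but one missed by some $O_i$, closed up by an edge of that $O_i$ to force a rainbow odd cycle by parity. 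None of these three mechanisms is visible in your outline, and the parity bookkeeping you flag as the ``main obstacle'' (extracting a rainbow \emph{odd} cycle from a closed walk or from an even-cycle dependency) is exactly where your plan, as written, does not yet constitute a proof.
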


Clearly, the cardinality of a pruned cactus composed solely of odd cycles is even. Therefore, when $n$ is even, $n-1$ odd cycles cannot form a pruned cactus, and so \cref{thm:odd-cycles} follows from \cref{thm:odd-cycles-char}.

For the inductive proof of \cref{thm:odd-cycles-char}, we need the following technical lemma.

\begin{lemma}\label{lem:occ}
  Let $\OO := \set{O_1, \dots, O_n}$ be a family of odd cycles in $K_{n+1}$ without any rainbow odd cycle, and denote $\KK := \set{O_1, \dots, O_k}$, where $k < n$. Suppose that $Q$ is a $(k+1)$-vertex subgraph of $\union\KK$ and $V \subseteq V(Q)$ such that every pair of vertices in $V$ can be connected by a $\KK$-rainbow even path in $Q$. Then
  \begin{enumerate}[nosep, label=(\alph*)]
    \item No edge in $O_{k+1}, \dots, O_n$ has both endpoints in $V$.\label{lem:occ-a}
  \end{enumerate}
  Moreover, let $\pi$ be the contraction\footnote{A contraction operation removes all edges between any pair of  contracted vertices.} that replaces $V(Q)$ with a single vertex $\bar{v}$, and suppose that $P_{k+1}, \dots, P_n$ are respectively subgraphs of $O_{k+1}, \dots, O_n$ such that each $P_i$ avoids the vertices in $V(Q) \setminus V$. Denote $\PPP := \set{\pi(P_{k+1}), \dots, \pi(P_n)}$. Then the following holds.
  \begin{enumerate}[nosep, label=(\alph*), resume]
    \item There is no $\PPP$-rainbow odd cycle in $\pi(K_{n+1})$.\label{lem:occ-b}
    \item If $\PPP$ is a pruned cactus of odd cycles, then $\union\PPP$ is spanning in $\pi(K_{n+1})$, and no $O_i \setminus P_i$ contains an edge of the form $uv$ with $u \not\in V(Q) \cup V(P_i)$ and $v \in V \cap V(P_i)$.\label{lem:occ-c}
  \end{enumerate}
\end{lemma}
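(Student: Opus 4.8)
The plan is to handle parts (a), (b), (c) in turn, in each case building a rainbow odd cycle --- in $K_{n+1}$ to contradict the hypothesis on $\OO$, or in $\pi(K_{n+1})$ to contradict part (b). For part (a): if some $O_j$ with $k<j\le n$ had an edge $e=uv$ with $u,v\in V$, a $\KK$-rainbow even path $W$ in $Q$ joining $u$ and $v$ has positive even length, so it is not the single edge $uv$; hence $W+e$ is a cycle, of odd length, with $e$ representing $O_j$ and $W$ representing cycles among $O_1,\dots,O_k$ --- a rainbow odd cycle, contradiction.

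For part (b), suppose $\bar C$ is a $\PPP$-rainbow odd cycle in $\pi(K_{n+1})$; I would lift it to an $\OO$-rainbow odd cycle in $K_{n+1}$. The contraction $\pi$ is the identity away from $V(Q)$, and any edge of some $\pi(P_i)$ incident to $\bar v$ comes from an edge of $P_i$ with exactly one endpoint in $V$ (since $P_i$ avoids $V(Q)\setminus V$). So if $\bar C$ misses $\bar v$, it lifts verbatim to an odd cycle in $K_{n+1}$ whose edges represent distinct members of $\set{O_{k+1},\dots,O_n}$. If $\bar C$ meets $\bar v$, write $\bar C=\bar v\,w_1\cdots w_m\,\bar v$ with the $w_\ell\notin V(Q)$ and $m+1$ odd (so $m\ge 2$); lifting the two edges at $\bar v$ and keeping the rest gives a path $a\,w_1\cdots w_m\,b$ of odd length with $a,b\in V$, representing distinct members of $\set{O_{k+1},\dots,O_n}$. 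If $a=b$ this is already a rainbow odd cycle; if $a\ne b$, splice in a $\KK$-rainbow even path in $Q$ from $b$ to $a$ --- it is internally disjoint from the lifted path, one side lying in $V(Q)$ and the other meeting $V(Q)$ only at its endpoints --- to get a simple odd cycle, rainbow because the first path represents members of $\set{O_1,\dots,O_k}$ and the second members of $\set{O_{k+1},\dots,O_n}$. Either way a rainbow odd cycle appears, contradicting the hypothesis on $\OO$.

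For part (c), the spanning statement is a count: a pruned cactus satisfies $\abs{V(\union\PPP)}=\abs{\PPP}+1=n-k+1=\abs{V(\pi(K_{n+1}))}$, and $\union\PPP\subseteq\pi(K_{n+1})$. For the edge condition, suppose $O_i\setminus P_i$ (for some $i>k$) contains an edge $uv$ with $u\notin V(Q)\cup V(P_i)$ and $v\in V\cap V(P_i)$. Put $\tilde P_i:=P_i+uv$, still a subgraph of $O_i$ avoiding $V(Q)\setminus V$; by part (b) applied with $P_i$ replaced by $\tilde P_i$, the family $\PPP':=(\PPP\setminus\set{\pi(P_i)})\cup\set{\pi(\tilde P_i)}$ has no rainbow odd cycle in $\pi(K_{n+1})$. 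Since $\pi(\tilde P_i)=\pi(P_i)+\bar v u$, and $u\in V(\union\PPP)$ (by spanning) while $u\notin V(\pi(P_i))$, it now suffices to produce a $\bar v$--$u$ path $\bar W$ in $\union\PPP$ of \emph{even} length that is rainbow over $\PPP\setminus\set{\pi(P_i)}$: then (having length $\ge 2$, such a path is not the edge $\bar v u$, so) $\bar W+\bar v u$ is a simple odd cycle, rainbow over $\PPP'$ with $\bar v u$ representing $\pi(\tilde P_i)$ --- the desired contradiction.

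Building $\bar W$ is the crux, and the difficulty is that $\bar v$ may lie in the interior of the cycle $B_0:=\pi(P_i)$, so every $\bar v$--$u$ path runs through $B_0$ and one risks over-using the color $\pi(P_i)$. I would use the fact that in a pruned cactus each block (cycle) of length $\ell$ occurs exactly $\ell-1$ times in the family --- an induction on the recursive definition, using that identifying two graphs at one vertex merges no blocks and identifies no block of $\OO_1$ with a block of $\OO_2$ (these would share all $\ge 3$ of their vertices, but $\union\OO_1$ and $\union\OO_2$ share only one). Hence $B_0$ occurs $\abs{B_0}-1\ge 2$ times, so deleting the single copy $\pi(P_i)$ leaves $\union\PPP$ unchanged and still $\abs{B_0}-2$ copies of $B_0$ available. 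As $\bar v\in B_0$ and $u\notin B_0$, I would route $\bar W$ from $\bar v$ along the \emph{shorter} of the two arcs of $B_0$ to the cut vertex separating $B_0$ from $u$ --- it has at most $\floor{\abs{B_0}/2}\le\abs{B_0}-2$ edges, which can therefore be assigned distinct copies of $B_0$ other than $\pi(P_i)$ --- and then continue to $u$ through the other blocks, on each of which the relevant arc (at most $\abs{B}-1$ edges of a block $B\ne B_0$) is realizable with the $\abs{B}-1$ copies of $B$. Finally, to make $\abs{\bar W}$ even, I would swap the arc used inside the last block on the path, which contains $u$ and hence differs from $B_0$, using that the two arcs of an odd cycle between two of its vertices have opposite parities. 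This parity adjustment, together with the multiplicity bookkeeping for pruned cacti, is the main obstacle; everything else is routine verification.
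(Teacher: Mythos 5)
Your proof is correct and follows essentially the same route as the paper's: parts (a) and (b) are argued identically (complete a bad edge or a lifted odd path by a $\KK$-rainbow even path in $Q$), and part (c) reduces, exactly as in the paper, to producing a $\PPP$-rainbow even $\bar v$--$u$ path not representing $\pi(P_i)$ and closing it with the edge $u\bar v$ to contradict part (b). The only difference is that you spell out the multiplicity and parity bookkeeping inside the pruned cactus (each $\ell$-cycle appearing $\ell-1$ times, opposite parities of the two arcs of an odd cycle) that the paper compresses into the assertion that ``one can find'' such a path.
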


\begin{proof}
  Note that any edge in $O_{k+1}, \dots, O_{n}$ with both endpoints in $V$ can be completed to an $\OO$-rainbow odd cycle by a $\KK$-rainbow even path in $Q$.

  Assume for the sake of contradiction that there is a $\PPP$-rainbow odd cycle $C$ in $\pi(K_{n+1})$. Edges of the form $u\bar{v}$ after the contraction correspond to edges of the form $uv$ with $v \in V$ before the contraction. Hence, prior to the contraction, $C$ was either itself an $(\OO\setminus \KK)$-rainbow odd cycle (which does not exist), or an ($\OO\setminus \KK$)-rainbow odd path between a pair of vertices in $V$, which can be completed to an $\OO$-rainbow odd cycle by a $\KK$-rainbow even path in $Q$.

  To prove \ref{lem:occ-c}, suppose that the family $\PPP$ is a pruned cactus of odd cycles. Notice that $\pi(K_{n+1})$ has $n + 1 - \abs{V(Q)} + 1 = n - k + 1$ vertices, and the underlying graph $\union\PPP$ of the pruned cactus $\PPP$ has $\abs{\PPP} + 1 = n - k + 1$ vertices. Thus $\union\PPP$ is spanning in $\pi(K_{n+1})$, and so $\bar{v}$ is on $\union\PPP$. Finally, suppose on the contrary that some $O_i \setminus P_i$ contains an edge $uv$ with $u \not\in V(Q) \cup V(P_i)$ and $v \in V \cap V(P_i)$. Since $\bar{v} = \pi(v)$ is on $\pi(P_i)$ and $u$ is not on $\pi(P_i)$, one can find a $\PPP$-rainbow even path from $\bar{v}$ to $u$, in which $\pi(P_i)$ is not represented. This $\PPP$-rainbow even path can then be completed by the edge $\pi(uv) = u\bar{v}$ to a $\set{\pi(P_{k+1}), \dots, \pi(P_{i-1}), \pi(uv), \pi(P_{i+1}),\dots, \pi(P_n)}$-rainbow odd cycle. However this contradicts \ref{lem:occ-b} with $uv$, which is an edge of $O_i$ that avoids $V(Q)\setminus V$, playing the role of $P_i$.
\end{proof}

The last ingredient is a corollary of Rado's theorem for matroids~\cite{R42}, that gives a necessary and sufficient condition for a family of connected subgraphs to have a rainbow spanning tree.

\begin{theorem}[Rado's theorem for matroids]
  Given a matroid with ground set $E$, for every family $\set{E_1, \dots, E_m}$ of subsets of $E$, there exists a rainbow independent set of size $m$ if and only if $\rank{E_I} \ge \abs{I}$ for every $I \subseteq [m]$, where $E_I$ is shorthand for $\bigcup_{i \in I}E_i$.
\end{theorem}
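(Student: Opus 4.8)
This is the classical theorem of Rado, and I would prove it by the standard ``element‑deletion'' induction. The ``only if'' direction is immediate: if $e_1 \in E_1, \dots, e_m \in E_m$ are distinct and $\set{e_1, \dots, e_m}$ is independent, then for every $I \subseteq [m]$ the set $\set{e_i : i \in I}$ is an independent subset of $E_I$ of size $\abs{I}$, so $\rank{E_I} \ge \abs{I}$.

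For the ``if'' direction the plan is to induct on $\sum_{i=1}^m \abs{E_i}$, treating the rank condition as a standing hypothesis. If $\abs{E_i} = 1$ for every $i$, write $E_i = \set{e_i}$; the rank condition with $I = [m]$ reads $\rank{\set{e_1, \dots, e_m}} \ge m$, which forces the $e_i$ to be pairwise distinct and $\set{e_1, \dots, e_m}$ to be independent --- exactly the required rainbow independent set. Otherwise some $\abs{E_j} \ge 2$, and the whole matter reduces to the following claim: \emph{there is an element $e \in E_j$ such that the family obtained by replacing $E_j$ with $E_j \setminus \set{e}$ still satisfies the rank condition}. Indeed, granting this, the induction hypothesis yields a rainbow independent set of size $m$ for the smaller family, and the same set works for the original family.

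To prove the claim I would argue by contradiction. If it fails, then for each $e \in E_j$ there is a witness $A_e \subseteq [m]$ such that the union of the $E_i$, $i \in A_e$, after the replacement has rank less than $\abs{A_e}$. Two bookkeeping observations pin down the structure of such a witness: first, $j \in A_e$, since otherwise the union is unchanged; second, because deleting one element drops the rank by at most one while $\rank{E_{A_e}} \ge \abs{A_e}$, the witness forces $e \notin E_i$ for all $i \in A_e \setminus \set{j}$, so the post‑replacement union is exactly $E_{A_e} \setminus \set{e}$, of rank $\abs{A_e} - 1$. Now pick two distinct elements $e_1, e_2 \in E_j$ and set $A := A_{e_1}$, $B := A_{e_2}$, so $j \in A \cap B$. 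Put $X := E_A \setminus \set{e_1}$ and $Y := E_B \setminus \set{e_2}$, so $\rank{X} \le \abs{A} - 1$ and $\rank{Y} \le \abs{B} - 1$. Since $e_1, e_2 \in E_j \subseteq E_A \cap E_B$ and $e_1 \ne e_2$, both elements lie in $X \cup Y$, hence $X \cup Y = E_{A \cup B}$ and $\rank{X \cup Y} \ge \abs{A \cup B}$; and by the second observation $X \cap Y$ contains $\bigcup_{i \in (A \cap B) \setminus \set{j}} E_i$, hence $\rank{X \cap Y} \ge \abs{A \cap B} - 1$. Submodularity of the rank function then gives
\[
  (\abs{A} - 1) + (\abs{B} - 1) \;\ge\; \rank{X} + \rank{Y} \;\ge\; \rank{X \cup Y} + \rank{X \cap Y} \;\ge\; \abs{A \cup B} + \abs{A \cap B} - 1 \;=\; \abs{A} + \abs{B} - 1,
\]
a contradiction, which proves the claim.

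I expect the claim to be the only real obstacle: the tempting move of deleting an arbitrary element of $E_j$ can break the rank condition, so one must exhibit a ``safe'' deletion, and the submodularity estimate above --- fed by the observation that a tight witness expels the deleted element from every other set it indexes --- is precisely what delivers it. (One could alternatively derive the theorem from the matroid union theorem, but the direct induction is shorter and self‑contained.)
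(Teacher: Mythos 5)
Your proof is correct and complete. Note that the paper does not prove this statement at all: it imports Rado's theorem from the 1942 paper cited as \cite{R42} and only proves the graphic-matroid corollary that follows it, so there is no in-paper argument to compare against. What you give is the classical element-deletion induction: the only genuinely delicate point is the claim that some element of a non-singleton $E_j$ can be deleted without violating the rank condition, and your handling of it is right --- a tight witness $A_e$ must contain $j$, must have the deleted element absent from every $E_i$ with $i \in A_e \setminus \set{j}$, and must have post-deletion rank exactly $\abs{A_e}-1$; feeding $X = E_A \setminus \set{e_1}$ and $Y = E_B \setminus \set{e_2}$ into submodularity, with $X \cup Y = E_{A \cup B}$ (because $e_1 \in Y$ and $e_2 \in X$) and $X \cap Y \supseteq E_{(A\cap B)\setminus\set{j}}$, yields the contradiction $\abs{A}+\abs{B}-2 \ge \abs{A}+\abs{B}-1$. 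The base case and the ``only if'' direction are handled correctly as well.
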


\begin{corollary}\label{lem:rainbow-spanning-tree}
  For every family $\set{E_1, \dots, E_m}$ of connected subgraphs (viewed as edge sets) in $K_{m+1}$, the family has a rainbow spanning tree if and only if $\abs{V(E_I)} \ge \abs{I} + 1$ for every $I \subseteq [m]$.
\end{corollary}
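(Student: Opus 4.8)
The plan is to apply Rado's theorem with the graphic (cycle) matroid $M$ of $K_{m+1}$, whose ground set is $E(K_{m+1})$. First I would record the standard facts about $M$: since $K_{m+1}$ is connected on $m+1$ vertices, $M$ has rank $m$, its independent sets are exactly the acyclic edge sets (forests), and its bases are exactly the spanning trees. Hence a rainbow independent set of size $m$ is precisely a rainbow spanning tree, and Rado's theorem tells us that $\set{E_1,\dots,E_m}$ has a rainbow spanning tree if and only if $\rank{E_I}\ge\abs{I}$ for every $I\subseteq[m]$. So the whole task reduces to showing that, \emph{for families of connected subgraphs}, this rank condition is equivalent to the stated vertex-count condition.

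The remaining input is the rank formula $\rank{F}=\abs{V(F)}-c(F)$ for an edge set $F$, where $c(F)$ is the number of connected components of the graph $(V(F),F)$. The forward implication is then immediate: if $\rank{E_I}\ge\abs{I}$ for every nonempty $I$, then $\abs{V(E_I)}=\rank{E_I}+c(E_I)\ge\abs{I}+1$ since $c(E_I)\ge1$. (Strictly speaking the displayed vertex condition should be read over nonempty $I$ only — for $I=\varnothing$ Rado's condition is automatic — and this is the only regime that matters, so I would state it for nonempty $I$.)

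For the converse, which is where the hypothesis of connectivity is actually used, assume $\abs{V(E_J)}\ge\abs{J}+1$ for every nonempty $J\subseteq[m]$, fix a nonempty $I\subseteq[m]$, and set $c:=c(E_I)$. Because each $E_i$ is connected, it lies entirely inside a single connected component of the graph $\bigcup_{i\in I}E_i$; this lets me partition $I=I_1\sqcup\dots\sqcup I_c$ according to the component containing $E_i$, with each $I_j$ nonempty since every component carries at least one edge. The vertex sets $V(E_{I_1}),\dots,V(E_{I_c})$ are then pairwise disjoint (they are the vertex sets of the distinct components), so
\[
  \abs{V(E_I)} \;=\; \sum_{j=1}^{c}\abs{V(E_{I_j})} \;\ge\; \sum_{j=1}^{c}\bigl(\abs{I_j}+1\bigr) \;=\; \abs{I}+c,
\]
whence $\rank{E_I}=\abs{V(E_I)}-c\ge\abs{I}$, and Rado's theorem finishes the proof. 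The only real obstacle is the bookkeeping in this last step: one must see that summing the global vertex-count hypothesis over the components of $E_I$ exactly recovers the matroid rank bound, and that this decomposition into nonempty index classes is legitimate — which is precisely what the connectedness of each $E_i$ guarantees.
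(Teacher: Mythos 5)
Your proof is correct and follows essentially the same route as the paper: apply Rado's theorem to the graphic matroid, use the rank formula $\rank{F}=\abs{V(F)}-c(F)$, and for the key direction partition $I$ according to the connected components of $E_I$ (legitimate because each $E_i$ is connected) and sum the vertex-count hypothesis over the parts. The only difference is that you also spell out the easy ``only if'' direction, which the paper leaves to the reader.
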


\begin{proof}
  The ``only if'' direction is easy to check. For the ``if'' direction, it suffices to verify the rank inequalities in Rado's theorem for matroids. Recall that, in a graphic matroid, $\rank{E} = \abs{V(E)} - c(E)$ for every edge set $E$, where $c(E)$ is the number of connected components of $E$. Pick an arbitrary $I \subseteq [m]$. Because each $E_i$ is connected, we can partition $I$ into sets $I_1, \dots, I_c$, where $c := c(E_I)$, such that $E_{I_1}, \dots, E_{I_c}$ are the connected components of $E_I$. Since $\abs{V(E_{I_j})} \ge \abs{I_j} + 1$ for all $j \in [c]$, we have the desired inequality
  \begin{equation*}
    \rank{E_I} = \abs{V(E_I)} - c(E_I) = \sum_{j = 1}^c \left(\abs{V(E_{I_j})} - 1\right) \ge \sum_{j = 1}^c \abs{I_j} = \abs{I}.\qedhere
  \end{equation*}
\end{proof}

\begin{proof}[Proof of \cref{thm:odd-cycles-char}]
  We do this by induction. The base case $n=2$ is trivial. Suppose $n \ge 3$, and let $\OO=\set{O_1, \dots, O_n}$ be a family of odd cycles in $K_{n+1}$ without any rainbow odd cycle. We break the inductive step into three cases.

  \paragraph{Case 1:}
  There exists a proper subfamily $\KK$ of $\mathcal{O}$ such that $\abs{V(\union\KK)} \le \abs{\KK}+1$.

  Since there is no $\KK$-rainbow odd cycle, by the induction hypothesis $\KK$ is a pruned cactus. By passing to a subfamily of $\KK$, we may assume without loss of generality that $\KK=\set{O_1, \dots, O_k}$, for some $k < n$, and $O_1, \dots, O_k$ are identical to a fixed odd cycle $O$ on $k+1$ vertices. Note that every pair of vertices in $V(O)$ can be connected by a $\KK$-rainbow even path in $O$. By \cref{lem:occ}\ref{lem:occ-a}, for every $i \in \set{k+1, \dots, n}$,  the arcs of $O_i$ defined by its vertices shared with $O$ are of length $\ge 2$. Since $O_i$ is odd, there exists an odd arc, call it $P_i$. In case $O_i$ and $O$ are vertex-disjoint, set $P_i := O_i$.

  Let $\pi$ be the contraction of $V(O)$ to a single vertex $\bar{v}$. By our choice of $P_i$, for each $i > k$, $\pi(P_i)$ is an odd cycle, and so \cref{lem:occ}\ref{lem:occ-b} and the inductive hypothesis imply that the family $\PPP := \set{\pi(P_{k+1}), \dots, \pi(P_n)}$ is a pruned cactus.

  \begin{claim*}
    For every $i > k$, $P_i = O_i$, in other words, $O_i$ and $O$ share at most $1$ vertex.
  \end{claim*}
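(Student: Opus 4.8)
The plan is to prove the claim by contradiction: assuming that $O_i$ and $O$ share at least two vertices for some $i > k$, I would exhibit an edge of $O_i \setminus P_i$ of the precise type that \cref{lem:occ}\ref{lem:occ-c} forbids. This is legitimate because \cref{lem:occ} is being invoked here with $Q = O$ and $V = V(Q) = V(O)$ (recall that every two vertices of $V(O)$ are joined by a $\KK$-rainbow even path inside $O$), and because we have already established that $\PPP$ is a pruned cactus of odd cycles; hence parts \ref{lem:occ-a}, \ref{lem:occ-b} and \ref{lem:occ-c} are all at our disposal.

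So suppose $\abs{V(O_i) \cap V(O)} \ge 2$. Then the shared vertices cut $O_i$ into at least two arcs, each of length $\ge 2$ by \cref{lem:occ}\ref{lem:occ-a}; in particular the chosen odd arc $P_i$ is a proper sub-path of $O_i$, so $O_i \setminus P_i$ is a nonempty path whose two endpoints are exactly the endpoints of $P_i$. Pick one endpoint $w$ of $P_i$, and let $wx$ be the edge of $O_i$ incident to $w$ that does not lie in $P_i$ (it exists since $w$ has degree $2$ in $O_i$); then $wx \in O_i \setminus P_i$, and $x$ is the neighbour of $w$ along the arc $A$ of $O_i$ following $P_i$ at $w$. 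I would then check that $x \notin V(Q) \cup V(P_i)$ while $w \in V \cap V(P_i)$. The latter is clear, as $w \in V(O) \cap V(P_i)$. For the former: $A$ joins two vertices of $V(O)$, so $\abs{A} \ge 2$ by \cref{lem:occ}\ref{lem:occ-a}, whence $x$ is an interior vertex of $A$ and $x \notin V(O) = V(Q)$; moreover $A$ and $P_i$ are distinct consecutive arcs of the cycle $O_i$, so $V(A) \cap V(P_i)$ consists only of arc-endpoints, all of which lie in $V(O_i) \cap V(O) \subseteq V(O)$, and since $x \in V(A) \setminus V(O)$ this forces $x \notin V(P_i)$. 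Thus $wx$ is an edge of $O_i \setminus P_i$ of exactly the form excluded by \cref{lem:occ}\ref{lem:occ-c} — a contradiction, which proves the claim.

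I expect the only point needing care is this last verification, that the ``outer'' endpoint $x$ avoids both $V(Q)$ and $V(P_i)$; this is precisely what \cref{lem:occ}\ref{lem:occ-a} is designed to guarantee (it keeps $x$ off $V(O)$), together with the elementary fact that two consecutive arcs of a cycle share only their common endpoint(s). No step is genuinely difficult: \cref{lem:occ} has been tailored so that this short argument closes the case, and the borderline subcase $\abs{V(O_i) \cap V(O)} = 2$, in which $P_i$ and $A$ happen to share both their endpoints, is subsumed by the same reasoning, since all those shared endpoints still lie in $V(O)$ whereas $x$ does not.
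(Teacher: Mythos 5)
Your proposal is correct and follows essentially the same route as the paper: both arguments take the edge of $O_i \setminus P_i$ adjacent to an endpoint of $P_i$, use \cref{lem:occ}\ref{lem:occ-a} to place its other endpoint outside $V(O)$ (hence outside $V(Q) \cup V(P_i)$), and derive a contradiction with \cref{lem:occ}\ref{lem:occ-c}. Your extra verification that $x \notin V(P_i)$ is a slightly more explicit version of what the paper asserts directly, but the substance is identical.
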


  Assume for contradiction that $P_i \neq  O_i$ for some $i > k$. Let $uv$ be an edge in $O_i \setminus P_i$ with $u \not\in V(P_i)$ and $v \in V(P_i)$. Note in addition that $u \not\in V(O)$ by \cref{lem:occ}\ref{lem:occ-a}, while $v \in V(O)$, which conflicts with \cref{lem:occ}\ref{lem:occ-c}.

  \begin{claim*}
    For every $i, j > k$, if $\pi(O_i)=\pi(O_j)$, then $O_i=O_j$.
  \end{claim*}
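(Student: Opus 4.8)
The plan is to show that $\pi(O_i)=\pi(O_j)$ can only occur because $O_i$ and $O_j$ differ in the position of one vertex lying in $V(O)$, and then to show that two such distinct cycles would yield a rainbow odd cycle. We may assume $i\neq j$. By the preceding Claim each of $O_i,O_j$ meets $V(O)$ in at most one vertex, so $\pi$ acts on $O_i$ (and on $O_j$) by renaming at most one vertex to $\bar v$; hence $\pi(O_i)$ is again an odd cycle of the same length, and $\bar v\in V(\pi(O_i))$ exactly when $O_i$ meets $V(O)$. If $\bar v\notin V(\pi(O_i))$, then $O_i$ is disjoint from $V(O)$, and since $\pi(O_j)=\pi(O_i)$ also avoids $\bar v$ the same holds for $O_j$, so $\pi$ is the identity on both and $O_i=\pi(O_i)=\pi(O_j)=O_j$.

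Otherwise $\bar v\in V(\pi(O_i))=V(\pi(O_j))$, so $O_i$ and $O_j$ meet $V(O)$ in single vertices $v_i$ and $v_j$. Let $a,b$ be the neighbors of $v_i$ on $O_i$ and $a',b'$ those of $v_j$ on $O_j$; all four lie outside $V(O)$, since an edge of $O_i$ with both endpoints in $V(O)$ would make $O_i$ share two vertices with $O$, contrary to the preceding Claim. Comparing $\pi(O_i)=\pi(O_j)$ at $\bar v$ forces $\set{a,b}=\set{a',b'}$, so after relabeling $a=a'$ and $b=b'$; comparing the remaining edges shows that $O_i$ and $O_j$ share the $a$--$b$ path $R:=O_i\setminus\set{v_ia,v_ib}=O_j\setminus\set{v_ja,v_jb}$, whence $O_i=R\cup\set{v_ia,v_ib}$ and $O_j=R\cup\set{v_ja,v_jb}$. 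If $v_i=v_j$ this gives $O_i=O_j$.

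It remains to rule out $v_i\neq v_j$, which I would do by producing an $\OO$-rainbow odd cycle, contradicting the hypothesis. Since $\abs{V(O)}=k+1$ is odd and $v_i\neq v_j$, the two arcs of $O$ between $v_i$ and $v_j$ have lengths of opposite parity; let $P$ be the odd one, so $1\le\abs{P}\le k$, and color its $t$-th edge by $O_t$, a valid $\KK$-rainbow coloring since $P\subseteq O=O_t$ for all $t\le k$. Then $C:=P\cup\set{v_ia,v_ja}$ is a cycle through $v_i$, $a$, $v_j$: it is simple because $a\notin V(O)\supseteq V(P)\cup\set{v_i,v_j}$, it has odd length $\abs{P}+2$, and it is $\OO$-rainbow with $v_ia$ representing $O_i$, $v_ja$ representing $O_j$, and $P$ representing $O_1,\dots,O_{\abs{P}}$, all distinct because $i\neq j$ and $\abs{P}\le k<i,j$. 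The only real work is the bookkeeping in the previous paragraph: extracting the structure of $O_i$ and $O_j$ from the single identity $\pi(O_i)=\pi(O_j)$ and tracking membership in $V(O)$ closely enough that $C$ is genuinely a simple rainbow odd cycle; once this is done, the odd-arc device --- the same one used earlier to route $\KK$-rainbow paths through $O$ --- closes the case at once.
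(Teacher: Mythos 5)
Your proof is correct and is essentially the paper's argument spelled out in detail: the pair of edges $v_ia$ and $v_ja$ is exactly the $\set{O_i,O_j}$-rainbow cherry with endpoints $v_i,v_j$ and center $a\notin V(O)$ that the paper invokes, and the odd arc $P$ of $O$ is the completing $\KK$-rainbow odd path. The structural bookkeeping about the shared path $R$ is more than is strictly needed, but everything checks out.
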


  Suppose on the contrary that $\pi(O_i)=\pi(O_j)$ and $O_i \neq O_j$ for some $i,j > k$. Let $v_i,v_j$ be respectively the vertices of $O_i, O_j$ shared with $O$. Then there exists an $\set{O_i,O_j}$-rainbow cherry with endpoints $v_i,v_j$ and a center not in $V(O)$, which can be completed to an $\OO$-rainbow odd cycle by a $\KK$-rainbow odd path in $O$.

  By \cref{lem:occ}\ref{lem:occ-c}, $\bar{v}\in V(\union\PPP)$, implying that $\union\OO$ is connected. By the last claim, for $i > k$, the multiplicity of every $O_i$ in $\OO$ is equal to the multiplicity of $\pi(O_i)$ in $\PPP$, which, by the fact that $\PPP$ is a pruned cactus, is $\abs{O_i}-1$. Together, this means that $\OO$ is a pruned cactus, as desired.

  \paragraph{Case 2:} Every odd cycle $O_i$ is Hamiltonian.

  Let $S$ be an $\OO$-rainbow star of maximum size, say $k$, and let $c$ be its center.\footnote{A \emph{star} of size $k$ is a set of $k \ge 2$ edges, sharing one vertex that is called the \emph{center} of the star.} Without loss of generality, we may assume that the cycles represented in $S$ are $O_1, \dots, O_k$. We may further assume that the cycles in $\OO$ are not identical for otherwise $\OO$ is already a pruned cactus.

  \begin{claim*}
    The size $k$ of $S$ satisfies $3 \le k <n$.
  \end{claim*}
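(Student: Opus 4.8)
The plan is to prove the two inequalities separately, exploiting the hypotheses of Case~2: since every $O_i$ is a Hamiltonian cycle of $K_{n+1}$, the length $n+1$ is odd, so $n$ is even and in particular $n\ge 4$; moreover all the $O_i$ share the vertex set $V:=V(K_{n+1})$ of size $n+1$, and by assumption they are not all equal. I will repeatedly use that a $2$-regular spanning subgraph of $K_{n+1}$ is determined by the (unordered) pair of edges it uses at each vertex; hence whenever $O_i\ne O_j$ there is a vertex $v$ where the two edges of $O_i$ at $v$ differ from the two edges of $O_j$ at $v$. Picking one edge of $O_i$ and one differing edge of $O_j$ at such a $v$ already gives an $\OO$-rainbow star of size $2$, so $k\ge 2$; the content of the claim is thus that $k$ is neither $\le 2$ nor equal to $n$.

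\emph{Upper bound $k<n$.} Suppose $k=n$. A star is a set of edges through its center going to distinct leaves, and $S$ is rainbow, so $S$ consists of $n$ edges at $c$ with distinct leaves representing the $n$ distinct cycles; since $\abs V=n+1$, these leaves are exactly $V\setminus\{c\}$, i.e.\ $S$ is a spanning star. Delete from $O_1$ its two edges at $c$: what remains is a Hamiltonian path $u_0u_1\cdots u_{n-1}$ on $V\setminus\{c\}$ with $n-1\ge 3$ edges, whose two endpoints are the $O_1$-neighbours of $c$. Look at the edge $u_1u_2$; as the path has at least four vertices, $u_1$ and $u_2$ are interior, hence neither is an $O_1$-neighbour of $c$. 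Let $cu_1$ be the edge of $S$ at the leaf $u_1$ (representing some $O_a$) and $cu_2$ the one at $u_2$ (representing some $O_b$); then $a\ne b$ (distinct leaves) and $a,b\ne 1$ (else $u_1$ or $u_2$ would be an $O_1$-neighbour of $c$). Thus $\{cu_1,\,u_1u_2,\,u_2c\}$ is a triangle whose three edges lie in the distinct cycles $O_a,O_1,O_b$, i.e.\ an $\OO$-rainbow odd cycle — contradiction. Hence $k\le n-1$.

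\emph{Lower bound $k\ge 3$.} Suppose $k\le 2$, so there is no $\OO$-rainbow star of size $3$ centered anywhere. The engine is: \emph{if a family $\mathcal C$ of at least three cycles all contain a common edge $e=vw$, then all members of $\mathcal C$ use the same second edge at $v$, and likewise at $w$.} Indeed, if two of them, $O_a$ and $O_b$, used distinct second edges $vx_a\ne vx_b$ at $v$ (each $\ne e$), then $vx_a$ (from $O_a$), $vx_b$ (from $O_b$) and $vw$ (from a third cycle of $\mathcal C$) would form a rainbow star of size $3$ at $v$. Now fix any vertex $v$ and consider the bipartite incidence graph between the $n$ cycles and the $n$ edges of $K_{n+1}$ at $v$, each cycle joined to its two edges at $v$; its maximum matching is at most $2$, so by König's theorem it has a vertex cover of size at most $2$. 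Since each cycle contributes two edges and $n\ge 3$, such a cover cannot consist of cycles only; checking the two remaining cases (two edges at $v$, or one cycle plus one edge at $v$) shows that in either case at least $n-1$ of the cycles contain one common edge $e_0$ at $v$. Let $\mathcal C$ be such a family, $\abs{\mathcal C}\ge n-1\ge 3$. Starting from $e_0$ and iterating the engine, the forced common second edges propagate along a Hamiltonian path of $K_{n+1}$ and close up, so every member of $\mathcal C$ equals one fixed Hamiltonian cycle $O$.

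Finally, if all $n$ cycles equal $O$ this contradicts that they are not all equal; otherwise exactly $n-1\ge 3$ of them equal $O$ and one, say $O_1$, differs from $O$. Choose a vertex $v'$ where the edge-pair of $O_1$ differs from that of $O$, and an edge $g$ of $O_1$ at $v'$ that is not one of the two edges $g_1,g_2$ of $O$ at $v'$; representing $g$ by $O_1$ and $g_1,g_2$ by two of the (at least two) other cycles equal to $O$ yields a rainbow star of size $3$ at $v'$, contradicting $k\le 2$. Hence $k\ge 3$. The easy part is the upper bound — a single rainbow triangle; the real work, and the expected obstacle, is the lower bound, where one must upgrade the purely local failure ``no rainbow $3$-star'' to the global rigidity ``almost all cycles coincide,'' which is exactly what the König dichotomy together with the edge-propagation engine is designed to do.
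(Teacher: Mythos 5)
Your proof is correct, and its core content matches the paper's. The upper bound $k<n$ is the same argument in both: if $S$ were a spanning star at $c$, an edge of the Hamiltonian cycle $O_1$ avoiding both $c$ and the leaf representing $O_1$ would close a rainbow triangle with two star edges (the paper phrases this as ``$O_1$ would have no edge with both endpoints off $\{c,v\}$, which is impossible for a Hamiltonian cycle when $n\ge 3$''; your explicit choice of $u_1u_2$ is the same observation). For the lower bound the paper simply takes a vertex $v$ of degree at least $3$ in $\union\OO$ (which exists since the cycles are not all identical) and notes that a ``quick argument'' produces a rainbow $3$-star there; you instead argue the contrapositive globally, using K\H{o}nig's theorem at an arbitrary vertex plus an edge-propagation step. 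Your route is valid but heavier than necessary: your own cover analysis already shows that the only feasible vertex cover of size $\le 2$ consists of two edges at $v$ (the ``one cycle plus one edge'' cover is impossible since each of the remaining $n-1\ge 2$ cycles has two distinct edges at $v$), so \emph{all} $n$ cycles use the same edge pair at every vertex and hence coincide, contradicting non-identity directly --- the propagation ``engine'' and the final case of a single deviating cycle are redundant. In short: correct, essentially the paper's approach, with the $k\ge 3$ half carried out in a more roundabout (but sound) way.
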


  Because the cycles in $\OO$ are not identical, there is a vertex $v$ in $\union\OO$ of degree at least $3$. A quick argument shows an $\OO$-rainbow star of size $3$ centered at $v$, meaning that $k \ge 3$. Negation of the second inequality means that $c$ is connected in $S$ to all other vertices of the graph. Suppose $O_1$ is represented by $cv$ in $S$. In the absence of an $\OO$-rainbow triangle, no edge of $O_1$ has both endpoints in $V(K_{n+1})\setminus\set{c, v}$. Because $\abs{V(K_{n+1})\setminus\set{c, v}} = n - 1 \ge 2$, it is impossible for $O_1$ to be Hamiltonian given that $cv$ is already in $O_1$.

  Let $V$ be the set of leaves of $S$. Since $\OO$ has no rainbow triangle, the cycles $O_{k+1}, \dots, O_n$ do not connect pairs of vertices of $V$. By the maximality of $S$, these cycles enter and exit $c$ through $V$. Therefore, for every $i > k$, $V$ partitions $O_i$ into arcs of length at least two, and at least one of these arcs, call it $P_i$, is odd and does not contain $c$.

  Let $\pi$ be the contraction that replaces $V(S)$ by a single vertex $\bar{v}$. As in Case~1, the family $\{\pi(P_{k+1}), \dots, \pi(P_n)\}$ is a pruned cactus of odd cycles.

  Since $k \ge 3$, $V$ partitions $O_n$ into at least $3$ arcs, one of which is next to $P_n$ and does not contain $c$. Hence $O_n \setminus P_n$ contains an edge $uv$ with $u \not\in V(S) \cup V(P_n)$ and $v \in V \cap V(P_n)$, which contradicts \cref{lem:occ}\ref{lem:occ-c}.

  \paragraph{Case 3:} For every proper subfamily $\KK$ of $\OO$, $\abs{V(\union\KK)} > \abs{\KK} + 1$, and some $O_i$ is not Hamiltonian.

  Without loss of generality, assume that $O_n$ does not contain some vertex $v$. Set $V := V(K_{n+1}) \setminus \set{v}$. We apply \cref{lem:rainbow-spanning-tree} to the family of subgraphs $O_1[V], \dots, O_{n-1}[V]$ induced by $V$, and obtain an $\set{O_1, \dots, O_{n-1}}$-rainbow tree $T$ that spans $V$. Since $O_n$ is of odd length, one of its edges does not obey the bipartition of $T$. Adding that edge to $T$ yields a rainbow subgraph that supports an odd cycle.
\end{proof}

\section{Rainbow cycles}\label{sec:rgc}

Here is a cheap bound on the size of the family that ensures a rainbow set with a certain property.

\begin{proposition}\label{lem:naive}
  Given a ground set $E$ and a property $\PP \subseteq 2^E$ with $\varnothing\not\in \PP$ that is closed upwards, every family of $m+1$ subsets $E_1, \dots, E_{m+1}$ of $E$ with each $E_i \in \PP$ has a rainbow set in $\PP$, where
  \[
    m := \max\dset{\abs{F}}{F \subseteq E \text{ and }F \not\in \PP}.
  \]
\end{proposition}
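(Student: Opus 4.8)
The plan is a greedy construction: build a rainbow set one edge at a time and argue that it cannot get stuck outside $\PP$. Start from the empty rainbow set $R = \varnothing$ with the empty injection $\sigma$, and maintain throughout the invariant that $R$ is a rainbow set, i.e.\ each element of $R$ is assigned a distinct color from $\set{E_1, \dots, E_{m+1}}$ containing it. If at some point $R \in \PP$ we are done, so assume $R \notin \PP$ and aim to enlarge $R$ to a strictly larger rainbow set.

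The key step is this enlargement. If $R \notin \PP$, then by the very definition of $m$ we have $\abs{R} \le m$ (note $R \subseteq E$ automatically); since $R$ uses at most $\abs{R} \le m$ of the $m+1$ colors, some color $E_i$ is not represented in $R$. Now $E_i \not\subseteq R$: otherwise $R \supseteq E_i \in \PP$, and upward-closedness of $\PP$ would force $R \in \PP$, a contradiction. Hence there is an edge $e \in E_i \setminus R$, and setting $\sigma(e) := E_i$ enlarges $R$ to a rainbow set of size $\abs{R}+1$ using one additional color.

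Iterating, the size of $R$ grows by one at each step, and the step can always be carried out as long as $R \notin \PP$. After at most $m+1$ steps, either we have already reached a rainbow set lying in $\PP$, or $\abs{R} = m+1 > m$, in which case the maximality in the definition of $m$ forces $R \in \PP$; either way the process terminates at a rainbow set in $\PP$, proving the proposition. There is no serious obstacle here; the only point requiring care is the observation that an unrepresented color $E_i$ cannot already be contained in $R$ — this is exactly where upward-closedness of $\PP$ enters, working together with the bound $\abs{R} \le m$ that guarantees such an unrepresented $E_i$ exists at all.
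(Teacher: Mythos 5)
Your proposal is correct and is essentially the paper's argument: the paper takes a maximum-size rainbow set not in $\PP$ and extends it once by an unrepresented color $E_i$ (using $\abs{R}\le m$ to find such a color and upward-closedness to get $E_i\not\subseteq R$), whereas you iterate the very same extension step greedily from $\varnothing$. The key observations are identical, so this is the same proof in a slightly different packaging.
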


\begin{proof}
  Take $R$ to be a rainbow subset of $E$ not in $\PP$ of maximum size. Since $R \not\in \PP$, $\abs{R} \le m$ and some $E_i$ is not represented in $R$. Because $E_i \in \PP$, $E_i \neq \varnothing$, and moreover because $\PP$ is closed upwards, $E_i \not\subseteq R$. Take $e \in E_i \setminus R$ and define $R' := R \cup \set{e}$, which is rainbow. By the maximality of $R$, we know that $R' \in \PP$.
\end{proof}

For rainbow cycles, simply note that a subgraph of $K_n$ without cycles, that is a forest, contains at most $n-1$ edges.

\begin{proposition}\label{lem:rgc}
  Every family of $n$ cycles in $K_n$ has a rainbow cycle. \qed
\end{proposition}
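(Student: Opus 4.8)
The plan is to obtain this as an immediate instance of \cref{lem:naive}. I would take the ground set $E$ to be the edge set of $K_n$, and let $\PP \subseteq 2^E$ be the family of all edge sets that contain a cycle. First I would check the hypotheses of \cref{lem:naive}: the empty edge set contains no cycle, so $\varnothing \notin \PP$; and if $F \in \PP$ and $F \subseteq F'$, then the cycle inside $F$ is still inside $F'$, so $\PP$ is closed upwards. Each member $O_i$ of the given family lies in $\PP$, being itself a cycle.

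Next I would compute the parameter $m = \max\dset{\abs{F}}{F \subseteq E,\ F \notin \PP}$. An edge set $F \subseteq E$ fails to be in $\PP$ exactly when $F$ is acyclic, i.e.\ a forest; and a forest on the $n$ vertices of $K_n$ has at most $n-1$ edges, with equality for any spanning tree. Hence $m = n-1$, so \cref{lem:naive} applies to any family of $m+1 = n$ members of $\PP$, in particular to our $n$ cycles, and yields a rainbow set $R \in \PP$.

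Finally I would note that a rainbow set containing a cycle contains a \emph{rainbow} cycle: if $\sigma$ is an injection witnessing that $R$ is rainbow and $C \subseteq R$ is a cycle, then $C$ together with $\sigma|_C$ is a rainbow cycle. (Unwinding \cref{lem:naive} in this setting even gives the proof concretely: take a maximal rainbow forest $R$; it has at most $n-1 < n$ edges, so some $O_i$ is unrepresented and, since $O_i \neq \varnothing$ is not contained in $R$, there is an edge $e \in O_i \setminus R$; by maximality $R \cup \set{e}$ is not a forest, so it contains a cycle, which is rainbow.) There is no real obstacle here: the entire content sits in \cref{lem:naive} and the elementary bound on the size of a forest, and the only point needing a word of comment is the passage from a rainbow set with a cycle to a rainbow cycle, which is immediate.
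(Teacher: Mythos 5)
Your proposal is correct and is exactly the paper's argument: the paper derives \cref{lem:rgc} from \cref{lem:naive} by taking $\PP$ to be the edge sets containing a cycle and noting that a forest in $K_n$ has at most $n-1$ edges, so $m=n-1$. Your added remark that a rainbow set containing a cycle yields a rainbow cycle is a fine (and harmless) clarification of a point the paper leaves implicit.
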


The sharpness of \cref{lem:rgc} is witnessed by a pruned cactus. But there is a more general construction showing this.

\begin{definition}
  A family $\OO$ of cycles is a \emph{saguaro} if the family $\OO$ is already a pruned cactus, or the family $\OO$ can be partitioned into three subfamilies $\OO_1, \set{O}, \OO_2$ such that $\OO_1$ and $\OO_2$ are two vertex-disjoint saguaros, and $O$ is an even cycle along which its vertices alternate between $V(\union \OO_1)$ and $V(\union \OO_2)$.
\end{definition}

One can inductively check that if $\OO$ is a saguaro then $\OO$ has no rainbow cycle, and $\abs{V(\union\OO)} = \abs{\OO} + 1$. We prove that this recursive construction is an exhaustive characterization of families of $n$ cycles in $K_{n+1}$ without any rainbow cycle.

\begin{theorem}\label{thm:cycles-char}
  For every family $\OO$ of $n$ cycles in $K_{n+1}$, no rainbow cycle exists if and only if the family is a saguaro.
\end{theorem}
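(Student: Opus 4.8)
The "if" direction should be routine: one checks by induction on the saguaro structure that a saguaro has no rainbow cycle. If $\OO$ is a pruned cactus we already know this. Otherwise $\OO = \OO_1 \cup \{O\} \cup \OO_2$ with $\OO_1,\OO_2$ vertex-disjoint saguaros and $O$ an even cycle alternating between $V(\cup\OO_1)$ and $V(\cup\OO_2)$. Any putative rainbow cycle $C$ uses at most $|\OO_1|$ edges from $\OO_1$, at most $|\OO_2|$ from $\OO_2$, and at most one from $O$; since the $\OO_i$ are vertex-disjoint and live on $|\OO_i|+1$ vertices, $C$ restricted to each side is a forest, and the single edge of $O$ cannot glue two such forests into a cycle because it would have to be used twice (its two endpoints lie on opposite sides and $C\cap O$ is a single edge). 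Also one records the vertex count: $|V(\cup\OO)| = |V(\cup\OO_1)| + |V(\cup\OO_2)| = (|\OO_1|+1)+(|\OO_2|+1) = n+1$, so $\cup\OO$ is spanning — this is the bookkeeping that will make the induction in the other direction close.

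The "only if" direction is the substance, and the plan is to mimic the three-case structure of the proof of Theorem 2.2. Assume $\OO = \{O_1,\dots,O_n\}$ in $K_{n+1}$ has no rainbow cycle; induct on $n$. \textbf{Case 1 (a proper subfamily $\KK$ with $|V(\cup\KK)| \le |\KK|+1$).} By induction $\KK$ is a saguaro, hence $\cup\KK$ spans exactly $|\KK|+1$ vertices; contract $V(\cup\KK)$ to a vertex $\bar v$. One needs an analogue of Lemma 2.3 for general (not just odd) cycles and rainbow cycles (not odd cycles): no edge of $O_{k+1},\dots,O_n$ joins two vertices reachable from each other by a $\KK$-rainbow path in $\cup\KK$, and the contracted pieces have no rainbow cycle — these follow by the same "complete the rainbow path to a rainbow cycle" argument, but now I do not get to choose a convenient parity of arc, so the pieces $\pi(O_i)$ may be cherries (paths of length 2) rather than cycles. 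This is the first real obstacle: I should take $P_i$ to be $O_i$ itself when $O_i$ meets $\cup\KK$ in at most one vertex, and otherwise a minimal sub-arc of $O_i$ between two consecutive shared vertices, so that $\pi(P_i)$ is either a cycle or a single edge at $\bar v$; then apply induction to the cycles among the $\pi(P_i)$ after first absorbing the edge-pieces, or re-index so that the surviving family is a genuine family of cycles in the smaller complete graph. From the saguaro structure of the contracted family and the no-rainbow-cycle lemma one argues, as in Theorem 2.2, that each $O_i$ ($i>k$) actually meets $\cup\KK$ in at most one vertex, that $\pi$ does not identify distinct cycles, and that the multiplicities match — concluding $\OO$ is obtained from a saguaro by gluing $\KK$ at $\bar v$, hence is itself a saguaro.

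\textbf{Case 2 (no small subfamily, but some $O_i$ not Hamiltonian).} If $O_n$ misses a vertex $v$, apply Corollary 2.7 to $O_1[V],\dots,O_{n-1}[V]$ with $V = V(K_{n+1})\setminus\{v\}$: the hypothesis of Case 2 gives $|V(\cup_{i\in I} O_i[V])|\ge |I|+1$ for all $I$ (here one must check the induced subgraphs still satisfy the rank bound — this uses that $O_i$ is 2-connected so deleting one vertex keeps it connected and drops the vertex count by at most one, matched against the strict inequality $|V(\cup\KK)|>|\KK|+1$), yielding a rainbow spanning tree of $V$ which together with $O_n$ gives a rainbow cycle — contradiction, so this case is vacuous. \textbf{Case 3 (every proper subfamily is "large" and every $O_i$ is Hamiltonian).} This is the genuinely new case and I expect it to be the main obstacle, because the even-alternating cycle $O$ in the definition of saguaro has to be produced here. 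Following the star argument of Case 2 in Theorem 2.2, take a maximum $\OO$-rainbow star $S$ of size $k$ centered at $c$ with leaves $V$; absence of a rainbow triangle forces $O_{k+1},\dots,O_n$ to avoid edges inside $V$ and (by maximality of $S$) to enter $c$ only through $V$, so $V$ cuts each $O_i$ ($i>k$) into arcs of length $\ge 2$. Contract $V(S)$ to $\bar v$; the contracted non-star cycles, suitably trimmed as in Case 1, have no rainbow cycle, so by induction they form a saguaro $\bar\OO$ on the right number of vertices, forcing $\bar\OO$ to span and $\bar v$ to lie on $\cup\bar\OO$. Now I must untangle which edges at $\bar v$ come back: the key point is that since all $O_i$ are Hamiltonian and $V$ has $k\ge 2$ vertices, the arcs of the non-star cycles through $V$ let me reconstruct, around $\bar v$, precisely one even cycle $O$ in $\OO$ alternating between the two saguaro-sides that $\bar v$ separates $\cup\bar\OO$ into — this is where I would have to do the careful local analysis, using Lemma 2.3(c)-type arguments to rule out "extra" edges $uv$ with $u\notin V(S)\cup V(P_i)$ and $v\in V\cap V(P_i)$, which would otherwise produce a rainbow cycle. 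Assembling the pieces: $\OO$ splits as (cycles contributing to one side) $\cup\{O\}\cup$ (cycles contributing to the other side), each side a saguaro by induction, and $O$ the alternating even cycle — so $\OO$ is a saguaro. The delicate part throughout is handling the degenerate contracted pieces (length-2 paths at $\bar v$) uniformly so that the induction applies to an honest family of cycles, and I would isolate that in a lemma up front.
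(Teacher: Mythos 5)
Your overall architecture (induction on $n$, the three-way case split, contraction of a small subfamily, and the rainbow-spanning-tree argument in the non-Hamiltonian case) matches the paper, and your ``if'' direction and your Case 2 are essentially correct. But there is a genuine gap in where and how the alternating even cycle of a saguaro gets produced. In your Case 1 you assert that, as in the odd-cycle proof, each $O_i$ with $i>k$ meets $\union\KK$ in at most one vertex; this is false precisely in the situation that distinguishes saguaros from pruned cacti. If $\OO$ decomposes as $\OO_1\cup\set{O}\cup\OO_2$ with $O$ even and alternating between the two sides, then taking $\KK=\OO_1$ puts you in Case 1 with $O$ meeting $V(\union\KK)$ in half of its vertices, and $\pi(O)$ is a star centered at $\bar v$ (with multiple edges), not a cycle, so the inductive hypothesis cannot be applied to the contracted family as you propose. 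The paper isolates exactly this degeneracy as a separate subcase and resolves it with a dedicated structural statement (\cref{lem:almost-spanning-tree}) about rainbow-cycle-free families consisting of cycles together with edge-disjoint stars at a fixed vertex; your plan gestures at ``absorbing the edge-pieces'' and ``isolating a lemma up front'' but never supplies that lemma, and it is the crux of the theorem. Relatedly, the paper takes $\KK$ \emph{maximal} subject to $\abs{V(\union\KK)}\le\abs{\KK}+1$, which is what forces the vertex counts to close up ($k+\ell=n$, resp.\ $k+\ell+1=n$) in both subcases; your proposal omits this.

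Second, your Case 3 (all cycles Hamiltonian, no small proper subfamily) cannot be where the alternating even cycle appears: if an even Hamiltonian cycle alternated between two sides, the cycles on either side would fail to be Hamiltonian, and either side would constitute a small proper subfamily, landing you back in Case 1. In the paper this Hamiltonian case terminates either in the trivial pruned cactus (all cycles identical) or in a contradiction, via the bound $k\le(n+1)/2$ on the maximum rainbow star (its leaf set is independent in each remaining Hamiltonian cycle), a rainbow-square argument when $k=(n+1)/2$, and a \cref{lem:gcc}\ref{lem:gcc-c}-type contradiction when $k<(n+1)/2$. So the ``careful local analysis around $\bar v$'' you defer to the Hamiltonian case is misplaced; the real work lies in the star subcase of Case 1, and without \cref{lem:almost-spanning-tree} (or an equivalent) the proof does not close.
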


Our proof strategy parallels the proof of \cref{thm:odd-cycles-char}, with  a few detours. A complication arises when an even cycle, after contracting its maximum independent set, becomes a star. To handle this problem, we shall use the following:

\begin{proposition}\label{lem:almost-spanning-tree}
  Let $v$ be a vertex of $K_{m+1}$, and let $\EE := \set{E_1, \dots, E_m}$ be a family of subgraphs of $K_{m+1}$, where each $E_i$ is either a star centered at $v$ or a cycle. Suppose that $\EE$ has no rainbow cycle, and every star in $\EE$ is edge-disjoint from all the other members  of $\EE$. If $E_1$ is a star, then there are $\ell$ cycles in $\EE$ avoiding $v$, for some $0 < \ell < m$, whose union with $E_1$ contains at most $\ell+2$ vertices.
\end{proposition}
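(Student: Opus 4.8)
The plan is to argue by contradiction: assume that the union of $E_1$ with any sub-collection of cycles of $\EE$ that avoid $v$ has "too many" vertices, and then use Rado's theorem (in the form of \cref{lem:rainbow-spanning-tree}) to build a rainbow spanning structure that closes up into a rainbow cycle, contradicting the hypothesis on $\EE$. Write $E_1$ as a star centered at $v$ with leaf set $L$, so $|L| = \rank{E_1}$ and $V(E_1) = L \cup \set{v}$ has $|L|+1$ vertices. Since $E_1$ is a star on at least two edges, $|L| \ge 2$, so $L$ contains two vertices; the eventual goal is to find an $(\EE \setminus \set{E_1})$-rainbow path between two vertices of $L$ that avoids $v$ — this path together with two edges of $E_1$ through $v$ would give a rainbow cycle.

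First I would separate the members of $\EE \setminus \set{E_1}$ into those that contain $v$ (call this part $\mathcal{A}$) and those that avoid $v$ (call this part $\mathcal{B}$, with $|\mathcal{B}| = m - 1 - |\mathcal{A}|$), and restrict attention to the graph $K_{m+1} - v$ on the $m$ vertices $V(K_{m+1}) \setminus \set{v}$; for a member of $\mathcal{A}$, deleting $v$ leaves a path (if it was a cycle through $v$) or a smaller star or edge (if it was a star centered at $v$ — but by the edge-disjointness hypothesis a star is edge-disjoint from the others, and here $E_1$ is the distinguished star, so any other star centered at $v$, once $v$ is deleted, is empty; hence effectively $\mathcal{A}$ consists of cycles through $v$, each becoming a path on $K_{m+1}-v$). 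The key claim I would establish is the contrapositive form: if for every sub-collection $\mathcal{C} \subseteq \mathcal{B}$ we had $|V(E_1) \cup V(\union \mathcal{C})| \ge |\mathcal{C}| + 3$, i.e.\ $|V(\union\mathcal{C}) \setminus (L \cup \set{v})| \ge |\mathcal{C}| + 2 - |L|$, then one can run \cref{lem:rainbow-spanning-tree} on an appropriately chosen family to obtain a rainbow spanning tree of $K_{m+1}-v$ using only members of $\EE \setminus \set{E_1}$, which contains a rainbow path between two leaves of $L$, and this extends via $E_1$ to a rainbow cycle. To make the count work one contracts $L$ to a single vertex (legitimate since $E_1$ supplies the connections among $L$), reducing to $m - |L| + 1$ vertices and the $m-1$ subgraphs obtained from $\EE \setminus \set{E_1}$; the hypothesis feeds exactly the inequalities $|V(E_I)| \ge |I| + 1$ needed by \cref{lem:rainbow-spanning-tree} after contraction, once one also uses that each member of $\mathcal{A}$ contributes a connected piece after deleting $v$ and that for $I$ touching $\mathcal{A}$ the vertex count is even more generous.

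The main obstacle I anticipate is bookkeeping the contraction of $L$ together with the two different types of members ($\mathcal{A}$ versus $\mathcal{B}$) so that the Rado rank inequalities hold for every index subset $I$ simultaneously — in particular, members of $\mathcal{A}$ become paths rather than cycles after deleting $v$, and one must be sure that these paths (and the contracted images of $\mathcal{B}$-cycles) are still connected subgraphs on enough vertices, and that a rainbow spanning tree of the contracted graph really does pull back to a rainbow forest of $K_{m+1}-v$ in which the contracted vertex (image of $L$) has degree $\ge 2$, so that two distinct leaves of $L$ are genuinely joined. Once the existence of the rainbow tree is secured, closing it to a rainbow cycle via two star edges of $E_1$ is routine. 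The numbers $0 < \ell < m$ fall out because $\mathcal{B}$ is nonempty (else $E_1$ together with the $\mathcal{A}$-cycles, which pass through $v$, already forces a rainbow cycle — each such cycle minus $v$ is a path between two vertices, and $E_1$ has $\ge 2$ edges, giving enough room), and $\ell < m$ since $E_1$ itself is not counted.
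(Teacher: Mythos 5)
Your plan has a fatal flaw at the closing step: the cycle you propose to exhibit is not rainbow. You close the $(\EE\setminus\set{E_1})$-rainbow path between two leaves $u_1,u_2\in L$ by adding the \emph{two} edges $u_1v$ and $u_2v$ of $E_1$. Under the paper's definition a rainbow set carries an \emph{injection} into $\EE$, so at most one edge can represent $E_1$; and since $E_1$ is edge-disjoint from every other member of $\EE$, neither $u_1v$ nor $u_2v$ can be charged to any other color. The closed walk you build therefore has two edges of the same color and yields no contradiction. The same problem undermines your justification for contracting $L$ (``$E_1$ supplies the connections among $L$''): $E_1$ can contribute only one edge to any rainbow set, so it cannot be treated as identifying all of $L$. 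What you actually need is a rainbow path from a \emph{single} leaf of $E_1$ to $v$ itself, closed by \emph{one} star edge --- which is why the paper grows a maximal $\set{E_2,\dots,E_m}$-rainbow tree \emph{containing} $v$ (rather than deleting $v$), observes that no leaf of $E_1$ can lie on it (else one star edge closes a rainbow cycle), deduces that the tree misses at least two vertices, and checks that the unrepresented members are cycles vertex-disjoint from the tree; those are the desired $\ell$ cycles, living with $E_1$ inside the at most $\ell+2$ vertices outside the tree together with $v$.

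Even setting the closing step aside, the Rado step does not go through. Your contradiction hypothesis only controls subfamilies of cycles \emph{avoiding} $v$, but the rank inequalities of \cref{lem:rainbow-spanning-tree} must hold for \emph{every} index set, including those indexing cycles through $v$ and further stars centered at $v$. A star $E_i$ with $i\ge 2$ centered at $v$ becomes edgeless after deleting $v$, and two identical triangles through $v$ (which the hypotheses permit, since only stars are required to be edge-disjoint) become a single shared edge; in either case $\abs{V(E_I)}\ge\abs{I}+1$ fails while $\EE$ still has no rainbow cycle, so the claim that the hypothesis ``feeds exactly the inequalities needed'' is false. Moreover, \cref{lem:rainbow-spanning-tree} as stated requires exactly one fewer subgraph than vertices, whereas after deleting $v$ and contracting $L$ you have $m-1$ subgraphs on $m-\abs{L}+1$ vertices, so at best you could hope for a partial transversal, and you would then need to control which colors are omitted.
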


\begin{proof}
  Let $R$ be a maximal $\set{E_2, \dots, E_m}$-rainbow tree containing $v$. We may assume that such a tree exists, since otherwise $E_2, \dots, E_m$ are cycles as required.

  Without loss of generality, assume that $E_2, \dots, E_k$ are represented in $R$, where $k = \abs{V(R)}$. Since $E_1$ is edge-disjoint from $E_i$ for $i \neq 1$, it is edge-disjoint from $R$. Furthermore, since $\EE$ has no rainbow cycle, $R$ does not contain any leaf of $E_1$. Since a star has at least two edges, it follows that  $k \le m - 1$.

  \begin{claim*}
    For every $i>k$, $E_i$ is a cycle that is vertex-disjoint from $R$.
  \end{claim*}

  The fact that $E_i$ is a cycle follows from the maximality of $R$ and the requirement that every star in $\EE$ is edge-disjoint from all other members of $\EE$. The disjointness from $R$ follows from the assumption that $\EE$ has no rainbow cycle.

  Let $\ell = m - k$. By the claim, $E_{k+1}, \dots, E_m$ are the desired $\ell$ cycles since their vertex sets, as well as that of $E_1$,  are contained in  $(V(K_{m+1}) \setminus V(R)) \cup \set{v}$, which is of size $m + 1 - k + 1 = \ell + 2$.
\end{proof}

Unlike in a pruned cactus, not every cycle in a saguaro is repeated more than once. We say an $\ell$-cycle is \emph{common} in the family if it is repeated exactly $\ell-1$ times. We shall use the following technical lemma that is analogous to \cref{lem:occ}.

\begin{lemma}\label{lem:gcc}
  Let $\OO := \set{O_1, \dots, O_n}$ be a family of cycles in $K_{n+1}$ without any rainbow cycle, and denote $\KK := \set{O_1, \dots, O_k}$, where $k < n$. Suppose that $Q$ is a $(k+1)$-vertex subgraph of $\union\KK$, and $V \subseteq V(Q)$ such that every pair of vertices in $V$ can be connected by a $\KK$-rainbow path of length at least $2$ in $Q$. Then
  \begin{enumerate}[nosep, label=(\alph*)]
    \item No edge in $O_{k+1}, \dots, O_n$ has both endpoints in $V$.\label{lem:gcc-a}
  \end{enumerate}
  Moreover, let $\pi$ be the contraction that replaces $V(Q)$ by a single vertex $\bar{v}$, and suppose $P_{k+1}, \dots, P_n$ are respectively subgraphs of $O_{k+1}, \dots, O_n$ such that each $P_i$ avoids the vertices in $V(Q) \setminus V$. Denote $\PPP := \set{\pi(P_{k+1}), \dots, \pi(P_n)}$. Then the following holds.
  \begin{enumerate}[nosep, label=(\alph*), resume]
    \item There is no $\PPP$-rainbow cycle in $\pi(K_{n+1})$.\label{lem:gcc-b}
    \item If $\PPP$ is a saguaro of cycles, then $\union \PPP$ is spanning in $\pi(K_{n+1})$. Moreover, for every $\pi(P_i)$ that is common in $\PPP$, $O_i \setminus P_i$ does not contain any edge of the form $uv$ with $u \not\in V(Q) \cup V(P_i)$ and $v \in V \cap V(P_i)$.\label{lem:gcc-c}
  \end{enumerate}
\end{lemma}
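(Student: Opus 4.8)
The plan is to copy the proof of \cref{lem:occ} almost line for line, reading ``cycle'' for ``odd cycle'', ``path of length $\ge 2$'' for ``even path'', and discarding all parity considerations; the only place where something genuinely new is needed is part~\ref{lem:gcc-c}, since parity is no longer available to force a path to have length $\ge 2$. For part~\ref{lem:gcc-a}: if an edge $ab$ of some $O_j$ with $j>k$ had both endpoints in $V$, then $ab$ together with a $\KK$-rainbow $ab$-path of length $\ge 2$ in $Q$ (which exists by hypothesis, and being a simple path of length $\ge 2$ does not contain the edge $ab$) would be an $\OO$-rainbow cycle --- a contradiction. For part~\ref{lem:gcc-b}: suppose $C$ is a $\PPP$-rainbow cycle in $\pi(K_{n+1})$ and lift each of its edges back to the corresponding edge of $P_{k+1},\dots,P_n$. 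If $\bar v\notin V(C)$, the lift is already an $\OO$-rainbow cycle. If $\bar v\in V(C)$, the two edges of $C$ at $\bar v$ lift to edges $w_1u_1$ and $w_2u_2$ with $w_1,w_2\in V$ and $u_1,u_2\notin V(Q)$ --- here part~\ref{lem:gcc-a} is used to rule out an edge of any $P_j$, $j>k$, inside $V$, so each lifted edge meets $V$ rather than $V(Q)\setminus V$ --- and therefore $C$ lifts to an $(\OO\setminus\KK)$-rainbow path from $w_1$ to $w_2$ whose interior avoids $V(Q)$; if $w_1=w_2$ this is already an $\OO$-rainbow cycle, and otherwise appending a $\KK$-rainbow $w_1w_2$-path of length $\ge 2$ in $Q$ (which meets the lift only in $w_1,w_2$) produces one. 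Either way the hypothesis is contradicted.

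For part~\ref{lem:gcc-c}, the spanning assertion is pure counting: an easy induction along the recursive definition shows that a saguaro of $m$ cycles has exactly $m+1$ vertices --- the pruned-cactus base is recorded in the text, and in the splitting step $V(\union\OO)=V(\union\OO_1)\sqcup V(\union\OO_2)$ because the bridging even cycle lies on the two parts, so $\abs{V(\union\OO)}=(\abs{\OO_1}+1)+(\abs{\OO_2}+1)=\abs{\OO}+1$. Since $\pi(K_{n+1})$ has $(n+1)-(k+1)+1=n-k+1=\abs{\PPP}+1$ vertices, $\union\PPP$ is spanning, and in particular $\bar v\in V(\union\PPP)$. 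For the ``moreover'' clause, assume such an edge $uv$ exists, write $Z:=\pi(P_i)$ (a common cycle of $\PPP$, by hypothesis), and note $\bar v=\pi(v)\in V(Z)$ while $u=\pi(u)\in V(\union\PPP)\setminus V(Z)$. The single edge $uv$ is a subgraph of $O_i$ avoiding $V(Q)\setminus V$, so part~\ref{lem:gcc-b} applies to the family obtained from $P_{k+1},\dots,P_n$ by replacing $P_i$ with $uv$; it therefore suffices to exhibit a rainbow $\bar vu$-path of length $\ge 2$ for the family $\PPP$ with one occurrence of $Z$ deleted, since appending the edge $\pi(uv)=u\bar v$ to such a path gives a rainbow cycle in the modified family, contradicting part~\ref{lem:gcc-b}.

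Thus everything reduces to a self-contained statement about saguaros that I would prove separately: \emph{if $\PPP$ is a saguaro, $Z$ is a common cycle of $\PPP$, $x\in V(Z)$, and $y\in V(\union\PPP)\setminus V(Z)$, then the family $\PPP$ with one occurrence of $Z$ deleted has a rainbow $xy$-path of length $\ge 2$} --- note that deleting one occurrence leaves $\abs{Z}-2\ge 1$ others, so $V(\union\PPP)$ is unchanged. I would prove this by induction on $\abs{\PPP}$ following the recursive structure of a saguaro, together with a companion claim, proved the same way, that \emph{any two distinct vertices of a saguaro are joined by a rainbow path of length $\ge 2$}. If $\PPP$ is a single block then $V(Z)=V(\union\PPP)$ and there is nothing to prove. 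If $\PPP$ splits at a cut vertex $w$ into two pruned cacti, with all copies of $Z$ inside one part $\PPP_1$: when $y\in V(\union\PPP_1)$ one recurses in $\PPP_1$; when $y$ is on the $\PPP_2$-side one routes $x\to w$ inside $\PPP_1$ with $Z$ once deleted (by the inductive statement if $w\notin V(Z)$, or along a shorter arc of $Z$ --- whose $\le\abs{Z}-2$ edges can be colored by the $\abs{Z}-2$ surviving copies of $Z$ --- if $w\in V(Z)$) and then $w\to y$ inside $\PPP_2$ by the companion claim, which alone contributes length $\ge 2$; the two legs are internally disjoint because they lie in the disjoint vertex sets of the two parts.

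The case I expect to be the main obstacle is the genuine saguaro recursion $\PPP=\PPP_1\cup\{O\}\cup\PPP_2$, where $Z$ (being common) lies in one part, say $\PPP_1$: the bridge cycle $O$ occurs only once, so a rainbow path may traverse at most one of its edges, and a crossing from the $\PPP_1$-side to the $\PPP_2$-side must be made through a single, well-chosen edge $c_jc_{j+1}$ of $O$. What saves the argument is that $O$ is even and hence has at least two edges joining the two sides; picking one whose $\PPP_1$-endpoint is different from $x$ --- possible since there are at least two candidate endpoints --- keeps the leg $x\to c_j$ (routed inside $\PPP_1$ with $Z$ once deleted, as in the split case) nontrivial, so the assembled path $x\to c_j\to c_{j+1}\to y$ (last leg inside $\PPP_2$) has length $\ge 2$; the sub-case $y\in V(\union\PPP_1)$ is handled by recursion in $\PPP_1$ as before. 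In all cases one checks that the legs are pairwise internally disjoint (automatic: they live in the disjoint vertex sets of the two sides, plus one crossing edge) and that the color classes are distinct (automatic: $\PPP_1$, $O$, and $\PPP_2$ contribute disjoint colors); this bookkeeping is routine once the crossing edge is chosen correctly.
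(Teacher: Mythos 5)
Your proposal is correct and follows exactly the route the paper intends: the paper omits the proof of \cref{lem:gcc}, saying only that it is similar to that of \cref{lem:occ}, and your parts \ref{lem:gcc-a} and \ref{lem:gcc-b} are faithful parity-free transcriptions of that argument. You also correctly isolate the one genuinely new ingredient needed for \ref{lem:gcc-c} --- a rainbow routing claim inside saguaros with one copy of a common cycle removed --- and your treatment of it (using that a common $\ell$-cycle retains $\ell-2\ge\lfloor\ell/2\rfloor$... rather, that $\lfloor\ell/2\rfloor\le\ell-2$ copies suffice for the short arc, and that the bridging even cycle has at least two attachment vertices on each side) is sound.
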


We leave the proof to the readers as it is similar to that of \cref{lem:occ}.

\begin{proof}[Proof of \cref{thm:cycles-char}]
  The ``if'' direction is easy to check. We show the ``only if'' direction by induction. The base case $n=2$ is trivial. Suppose $n \ge 3$, and $\OO := \set{O_1, \dots, O_n}$ is a family of cycles in $K_{n+1}$ without any rainbow cycle. We break the inductive step into three cases.

  \paragraph{Case 1:} There exists a proper subfamily $\KK$ of $\OO$ such that $\abs{V(\union\KK)} \le \abs{\KK}+1$.

  Let $\KK$ be maximal with this property. Without loss of generality, $\KK = \set{O_1, \dots, O_k}$, where $k := \abs{\KK} < n$. Set $V := V(\union \KK)$. By the induction hypothesis, $\KK$ is a saguaro. In particular, as can be observed in any saguaro, $\abs{V} = k + 1$ and every pair of vertices in $V$ can be connected by a $\KK$-rainbow path of length at least $2$. For every $i > k$, by \cref{lem:gcc}\ref{lem:gcc-a}, the arcs of $O_i$ defined by its vertices on $V$ are of length at least $2$. If there exists an arc of length $\ge 3$, choose one such arc and denote it by $P_i$. If there is no such arc, set $P_i := O_i$. In case $O_i$ avoids $V$, also set $P_i := O_i$.

  Let $\pi$ be the contraction that replaces $V$ by a single vertex $\bar{v}$. Then $\pi(P_i)$ is a cycle, with one possible exception: the vertices of $O_i$ alternate between $V$ and $V(K_{n+1})\setminus V$. In the latter case, $P_i = O_i$ and $\pi(P_i)$ is a star centered at $\bar{v}$ (with at least $2$ edges).

  We next break the current case into two subcases.

  \medskip
  \noindent\textbf{Subcase 1.1:} For every $i > k$, $\pi(P_i)$ is a cycle.

  \cref{lem:gcc}\ref{lem:gcc-b} and the inductive hypothesis imply that the family $\PPP := \set{\pi(P_{k+1}), \dots, \pi(P_n)}$ is a saguaro. By \cref{lem:gcc}\ref{lem:gcc-c}, $\bar{v} \in V(\union\PPP)$. As can be observed in any saguaro, there is a common cycle in $\PPP$ that contains $\bar{v}$. Let this cycle have length $\ell+1$, and assume without loss of generality that it appears in $\PPP$ as $\pi(P_{k+1}), \dots, \pi(P_{k+\ell})$.

  \begin{claim*}
    For every $i \in \set{k+1, \dots, k+\ell}$, $P_i = O_i$.
  \end{claim*}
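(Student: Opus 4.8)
Proof proposal for the Claim (within Subcase 1.1 of the proof of \cref{thm:cycles-char}).

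The plan is to mimic the argument of the first Claim in Case~1 of the proof of \cref{thm:odd-cycles-char}, invoking the common-cycle refinement of \cref{lem:gcc}\ref{lem:gcc-c} in place of the plain pruned-cactus version. So I would argue by contradiction: suppose $P_i \neq O_i$ for some $i \in \set{k+1, \dots, k+\ell}$. By the way $P_i$ was chosen in Case~1, this can only happen when $O_i$ has at least one arc (relative to $V$) of length $\ge 3$, and $P_i$ is one such arc; in particular $O_i$ meets $V$, so $O_i \neq P_i$ forces the existence of an edge $uv \in O_i \setminus P_i$ with $v$ an endpoint of the arc $P_i$ (hence $v \in V \cap V(P_i)$) and $u$ the next vertex of $O_i$ outside $P_i$. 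I need $u \notin V(Q) \cup V(P_i)$: indeed $u \notin V(P_i)$ since $uv$ leaves the arc $P_i$ at its endpoint $v$ and $O_i$ is a cycle, and $u \notin V = V(Q)$ by \cref{lem:gcc}\ref{lem:gcc-a} — here one uses that the arc immediately following $P_i$ along $O_i$ has length $\ge 2$, so its first edge $uv$ cannot have its other endpoint $u$ in $V$ unless $P_i$ and that arc together would put two $V$-vertices at distance~$1$ along $O_i$, contradicting \ref{lem:gcc-a}. Wait — more carefully: $u$ is simply the vertex of $O_i$ adjacent to $v$ on the side away from $P_i$; since the arc on that side has length $\ge 2$, its interior vertices (including $u$, if the arc has length exactly... ) lie outside $V$; in any case $u \notin V$ because $v$ and $u$ being both in $V$ and adjacent in $O_i$ would violate \ref{lem:gcc-a} directly.

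Now the contradiction: $\pi(P_i)$ is, by hypothesis of this paragraph, one of $\pi(P_{k+1}), \dots, \pi(P_{k+\ell})$, i.e.\ it is the chosen common cycle of the saguaro $\PPP$, so $\pi(P_i)$ is common in $\PPP$. But then \cref{lem:gcc}\ref{lem:gcc-c} asserts precisely that $O_i \setminus P_i$ contains no edge $uv$ with $u \notin V(Q) \cup V(P_i)$ and $v \in V \cap V(P_i)$, contradicting the edge we just produced. Hence $P_i = O_i$ for every $i \in \set{k+1, \dots, k+\ell}$, as claimed.

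The main obstacle is purely bookkeeping: one must be sure that the edge $uv$ extracted from $O_i \setminus P_i$ genuinely satisfies both membership conditions of \cref{lem:gcc}\ref{lem:gcc-c}. The condition $v \in V \cap V(P_i)$ is clear (take $v$ an endpoint of the arc $P_i$, which lies in $V$). The condition $u \notin V(Q) \cup V(P_i)$ splits as $u \notin V(P_i)$ — immediate, since $uv$ is an edge of $O_i$ leaving the arc $P_i$ and $O_i$ visits $P_i$ as a contiguous arc — and $u \notin V(Q) = V$, which is exactly \cref{lem:gcc}\ref{lem:gcc-a} applied to the edge $uv \in O_i$ (as $i > k$), since both $u,v \in V$ is forbidden. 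This is the only subtle point; everything else is a direct transcription of the corresponding step in \cref{thm:odd-cycles-char}.
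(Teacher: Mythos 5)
Your proof is correct and follows essentially the same route as the paper: take an edge $uv$ of $O_i$ adjacent to the arc $P_i$, note $v \in V \cap V(P_i)$ and $u \notin V \cup V(P_i)$ (the latter via \cref{lem:gcc}\ref{lem:gcc-a}), and contradict \cref{lem:gcc}\ref{lem:gcc-c} using that $\pi(P_i)$ is the common cycle of $\PPP$. The extra care you take in verifying $u \notin V(Q) \cup V(P_i)$ is exactly the bookkeeping the paper leaves implicit.
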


  Suppose on the contrary that $P_i \neq O_i$ for some $i \in \set{k+1, \dots, k+\ell}$. Then one of the two edges in $O_i$, say $uv$, adjacent to $P_i$, satisfies $u \not\in V$ and $v \in V(P_i) \cap V$, contradicting \cref{lem:gcc}\ref{lem:gcc-c}.

  Since $\pi(O_{k+1}), \dots, \pi(O_{k+\ell})$ are the same cycle of length $\ell + 1$, the union of $O_1, \dots, O_{k+\ell}$ contains $k + \ell + 1$ vertices. By the maximality property of $\KK$, it therefore follows $k + \ell = n$, in other words, $\pi(O_{k+1}), \dots, \pi(O_n)$ are the same cycle.

  \begin{claim*}
    The cycles $O_{k+1}, \dots, O_n$ also coincide.
  \end{claim*}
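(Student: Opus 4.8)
The plan is to imitate the second claim in Case~1 of the proof of \cref{thm:odd-cycles-char} (there phrased as: $\pi(O_i) = \pi(O_j)$ forces $O_i = O_j$). At this stage we already know that $\pi(O_{k+1}), \dots, \pi(O_n)$ all equal one cycle $\bar C$, which contains $\bar v$; fix the two neighbors $x, y$ of $\bar v$ on $\bar C$. The first task is to pin down the shape of each $O_i$ with $i > k$. By \cref{lem:gcc}\ref{lem:gcc-a} no edge of $O_i$ has both endpoints in $V$, so (using that $\pi(O_i) = \bar C$ is a \emph{simple} cycle, which rules out any collapse of $O_i$-edges under $\pi$) the number of $O_i$-edges meeting $V$ equals $\deg_{\bar C}(\bar v) = 2$, while that number is also $2\abs{V(O_i) \cap V}$. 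Hence $\abs{V(O_i) \cap V} = 1$; call that vertex $w_i$. Its two $O_i$-neighbors must contract onto the two $\bar C$-neighbors of $\bar v$, so they are $x$ and $y$, and the remaining edges of $O_i$ are exactly the edges of the path $\bar C - \bar v$ from $x$ to $y$. Thus $O_i$ is determined by $w_i$: it is that fixed path together with the spokes $w_i x$ and $w_i y$.

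Next I would assume for contradiction that $O_{k+1}, \dots, O_n$ do not all coincide. By the previous paragraph this yields $i, j > k$ with $w_i \neq w_j$. Since $w_i x \in O_i$ and $x w_j \in O_j$, the edges $w_i x$ and $x w_j$ form an $\set{O_i, O_j}$-rainbow cherry with endpoints $w_i, w_j$ and center $x \notin V$. Because $\KK$ is a saguaro on the vertex set $V$ and $w_i, w_j \in V$, there is a $\KK$-rainbow path $\tau$ of length at least $2$ from $w_j$ to $w_i$; all vertices of $\tau$ lie in $V$, so $\tau$ avoids $x$ and is internally disjoint from $\set{w_i, x, w_j}$. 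Therefore $\tau$ together with the cherry is a genuine cycle, and it is $\OO$-rainbow since $O_i, O_j \notin \KK$, contradicting the assumption that $\OO$ has no rainbow cycle. Hence $w_{k+1} = \dots = w_n$, and so $O_{k+1}, \dots, O_n$ coincide.

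I expect the only real work to be the structural description in the first paragraph; after that, the contradiction is produced exactly as in the proof of \cref{thm:odd-cycles-char}, and the remaining points (that $\tau$ avoids $x$ so the concatenation is a simple cycle, and that $O_i, O_j \notin \KK$ so the whole cycle is rainbow) are immediate.
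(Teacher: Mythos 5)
Your proof is correct and follows essentially the same route as the paper's one-line argument: produce an $\set{O_i,O_j}$-rainbow cherry with endpoints in $V$ and center outside $V$, and close it up with a $\KK$-rainbow path inside $V$. The structural analysis in your first paragraph (each $O_i$ meets $V$ in a single vertex $w_i$ and equals the fixed path $\bar C - \bar v$ plus the two spokes at $w_i$) is exactly the justification the paper leaves implicit, and it is argued correctly.
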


  The reason is that if $O_i\neq O_j$ for some $i,j >k$, then there exists an $\set{O_i, O_j}$-rainbow cherry with endpoints in $V$, that can be completed to an $\OO$-rainbow cycle by a $\KK$-rainbow path.

  As in the parallel stage of the proof of \cref{thm:odd-cycles-char}, the last claim implies that $\OO$ is  a saguaro.

  \medskip
  \noindent\textbf{Subcase 1.2:} For some $i > k$, $\pi(P_i)$ is a star centered at $\bar{v}$.

  Without loss of generality $\pi(P_{k+1})$ is a star centered at $\bar{v}$. Recall that each member in $\PPP$ is either a star centered at $\bar{v}$ or a cycle. Moreover \cref{lem:gcc}\ref{lem:gcc-b} implies that $\PPP$ has no rainbow cycle.

  \begin{claim*}
    Every star in $\PPP$ is edge-disjoint from all the other members of $\PPP$.
  \end{claim*}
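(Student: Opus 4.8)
The plan is to derive a contradiction from the existence of a star in $\PPP$ that shares an edge with another member. Suppose $\pi(P_i)$ is a star centered at $\bar v$; recall that this forces $P_i = O_i$ and that $O_i$ is an even cycle whose vertices alternate between $V$ and $V(K_{n+1}) \setminus V$. Suppose also, for contradiction, that $\pi(P_i)$ shares an edge with $\pi(P_j)$ for some $j \in \set{k+1, \dots, n}$ with $j \ne i$. Every edge of the star $\pi(P_i)$ is incident to $\bar v$, so the shared edge has the form $u\bar v$ with $u \in V(K_{n+1}) \setminus V$; under $\pi$ it comes from an edge $uv_i$ of $O_i$ with $v_i \in V$, and from an edge $uv_j$ of $P_j \subseteq O_j$ with $v_j \in V$.

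First I would dispose of the case $v_i \ne v_j$. Then $uv_i$ (taken from $O_i$) and $uv_j$ (taken from $O_j$) form an $\set{O_i, O_j}$-rainbow cherry with endpoints $v_i, v_j \in V$ and center $u \notin V$. Since $\KK$ is a saguaro, every pair of vertices of $V$ is joined by a $\KK$-rainbow path of length at least $2$, and such a path meets the cherry only at $v_i$ and $v_j$ (its interior lies in $V$, whereas $u \notin V$); appending it between $v_i$ and $v_j$ closes the cherry into an $\OO$-rainbow cycle, because $O_i$, $O_j$, and the cycles of $\KK$ have pairwise distinct indices. This contradicts the hypothesis that $\OO$ has no rainbow cycle.

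It remains to treat $v_i = v_j =: w$. Here I would exploit the alternating structure of $O_i$: the vertex $u$, lying in $V(K_{n+1}) \setminus V$, has exactly two neighbors on the cycle $O_i$, both in $V$, one of which is $w$; calling the other one $w'$, we get $w' \in V$ and $w' \ne w$. Then $uw'$ (from $O_i$) and $uw$ (from $O_j$) form an $\set{O_i, O_j}$-rainbow cherry with endpoints $w, w' \in V$ and center $u$, and completing it by a $\KK$-rainbow path of length at least $2$ between $w$ and $w'$ again yields an $\OO$-rainbow cycle, a contradiction. These two cases are exhaustive, which proves the claim. I do not anticipate a real obstacle here; the one point needing care is the second case, where one must check that the alternation of $O_i$ supplies a second neighbor of $u$ that lies in $V$ and differs from $w$, and this is immediate since such a cycle has even length at least $4$ with every vertex's neighbors in the opposite class.
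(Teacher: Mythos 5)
Your proof is correct and is essentially the paper's argument: the paper likewise observes that $u$ has two neighbors of $O_i$ in $V$ (by the alternation) and at least one neighbor of $O_j$ in $V$, extracts an $\set{O_i,O_j}$-rainbow cherry with distinct endpoints in $V$ and center $u$, and closes it with a $\KK$-rainbow path. Your explicit case split on whether $v_i = v_j$ just spells out why such a cherry with distinct endpoints must exist.
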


  Indeed, assume that for some $i, j > k$ we have an edge $u\bar{v}$ shared by $\pi(P_i)$ and $\pi(P_j)$, where $\pi(P_i)$ is a star centered at $\bar{v}$. Then in $O_i$ the vertex $u$ has two neighbors in $V$ and in $O_j$ it has at least one neighbor in $V$. Hence there is an $\set{O_i,O_j}$-rainbow cherry with endpoints in $V$ and center $u$, which can be completed to an $\OO$-rainbow cycle by a $\KK$-rainbow path.

  By \cref{lem:almost-spanning-tree} it follows that there exist $\ell$ cycles in $\PPP$ avoiding $\bar{v}$, say $\pi(P_{k+2}), \dots, \pi(P_{k+\ell+1})$, whose union with $\pi(P_{k+1})$ contains at most $\ell + 2$ vertices, one of them being $\bar{v}$. Note that if $\pi(P_i)$ is a cycle avoiding $\bar{v}$, then $P_i = O_i$. Hence the union of $O_1, \dots, O_{k+\ell+1}$ contains at most $(k + 1) + (\ell + 1)$ vertices. To reconcile this with our choice of $\KK$, the only way out is that $k + \ell + 1 = n$ and none of $\pi(P_{k+2}), \dots, \pi(P_n)$ contains $\bar{v}$. Thus all of $O_{k+2}, \dots, O_n$ avoid $V$, and so the union of these $n - k - 1$ cycles contains at most $n - k$ vertices. By the induction hypothesis, the subfamily $\set{O_{k+2}, \dots, O_n}$ is a saguaro of cycles that avoids $V$. Recall that the vertices of $O_{k+1}$ alternate between $V$ and $V(K_{n+1})\setminus V$. Therefore $\OO$ is a saguaro.

  \paragraph{Case 2:} Every cycle $O_i$ is Hamiltonian.

  Let $S$ be an $\OO$-rainbow star of maximum size, say $k$. Without loss of generality, assume that the cycles represented in $S$ are $O_1, \dots, O_k$. Denote $\KK := \set{O_1, \dots, O_k}$. As in the proof of \cref{thm:odd-cycles-char}, using the fact that $\OO$ has no rainbow cycle, we can deduce that $k < n$. As there, if $k = 2$ then all the cycles in $\OO$ are identical, so we may assume $k \ge 3$.

  Let $c$ be the center of $S$ and $V$ the set of its leaves. Notice that every pair of vertices in $V$ can be connected by a $\KK$-rainbow cherry. For an arbitrary $i > k$, by \cref{lem:gcc}\ref{lem:gcc-a}, $V$ is an independent set of $O_i$, and so $k \le (n+1)/2$.

  Suppose for a moment that $k = (n+1)/2$. Since $n - k = k - 1 \ge 2$, there are at least $2$ cycles in $\OO\setminus\KK$, and there is a vertex $u \not\in V(S)$. Note that $V$ partitions both $O_{k+1}$ and $O_{k+2}$ into arcs of length $2$. The two arcs through $u$ obtained respectively from $O_{k+1}$ and $O_{k+2}$ yield an $\set{O_{k+1},O_{k+2}}$-rainbow cherry with endpoints in $V$ and center $u$, which can be completed to an $\OO$-rainbow square by a $\KK$-rainbow cherry in $S$.

  Therefore $k < (n+1)/2$. Now, for every $i > k$, one of the arcs, call it $P_i$, of $O_i$ defined by $V$ is of length at least $3$. By the maximality of $S$, $P_i$ does not contain $c$. Let $\pi$ be the contraction that replaces $V(S)$ by $\bar{v}$. Again the family $\PPP := \set{\pi(P_{k+1}), \dots, \pi(P_n)}$ is a saguaro of cycles. Say $\pi(P_n)$ is a common cycle in $\PPP$. Note that $O_n$ is partitioned into at least $3$ arcs by $V$ because $\abs{V} = k \ge 3$. Thus one of the two edges in $O_n$, say $uv$, adjacent to $P_n$ satisfies $u \not\in V(S) \cup V(P_n)$ and $v \in V \cap V(P_n)$, which contradicts \cref{lem:gcc}\ref{lem:gcc-c}.

  \paragraph{Case 3:} For every proper subfamily $\KK$ of $\OO$, $\abs{V(\union\KK)} > \abs{\KK} + 1$, and some $O_i$ is not Hamiltonian.

  The analysis of the last case can be taken almost verbatim from the proof of \cref{thm:odd-cycles-char}.
\end{proof}

\section{Edge-disjoint families}\label{sec:rainbow-cycles-in-multigraphs}

Here we continue to pursue a rainbow cycle, but make the additional assumption that our family consists of pairwise disjoint sets of edges. In terms of colors, this amounts to the natural restriction that every edge of the underlying graph gets just one color.\footnote{When an edge gets two colors, one may or may not want to consider this a rainbow cycle of length $2$ (a \emph{digon}). In this paper we consider only cycles of length $3$ or more. If digons are allowed, then the restriction to edge-disjoint families serves to avoid this trivial kind of rainbow cycle.}

For a family $\EE$ of $n$ disjoint edge sets in $K_n$, we no longer need to assume that each set in $\EE$ is a cycle in order to guarantee a rainbow cycle. The following trivial observation holds.

\begin{proposition}\label{lem:edge-disjoint}
  Every family of $n$ edge-disjoint nonempty subgraphs of $K_n$ has a rainbow cycle. \qed
\end{proposition}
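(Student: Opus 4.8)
The plan is to run the maximal-rainbow-forest argument used for \cref{thm:woc} and \cref{lem:rgc}, but to exploit edge-disjointness in order to weaken the hypothesis ``each member is a cycle'' to the much weaker ``each member is nonempty''. Write $\EE := \set{E_1, \dots, E_n}$. Since the $E_i$ are pairwise edge-disjoint, every edge of $\union\EE$ lies in exactly one $E_i$, so a set $R \subseteq \union\EE$ is $\EE$-rainbow precisely when no two of its edges lie in a common $E_i$, the representing injection being forced. In particular, adjoining to a rainbow set an edge taken from a member not yet represented again yields a rainbow set.

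First I would take $F$ to be a maximal rainbow forest contained in $\union\EE$. Such an $F$ exists because each $E_i$ is nonempty, so a single edge already qualifies. Being a forest in $K_n$, $F$ has at most $n-1$ edges, hence represents at most $n-1$ of the $n$ members of $\EE$; pick a member $E_j$ not represented in $F$, and an edge $e \in E_j$. By the observation above, $F \cup \set{e}$ is still rainbow, so the maximality of $F$ forces $F \cup \set{e}$ not to be a forest. It therefore contains a cycle, which must pass through $e$ (as $F$ alone is a forest) and which is contained in the rainbow set $F \cup \set{e}$ — that is, a rainbow cycle, as desired.

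I do not expect a genuine obstacle here: the statement is, as the paper notes, a trivial observation. The only point worth flagging is precisely where edge-disjointness enters the argument — it is what permits a fresh edge of an unrepresented $E_j$ to be adjoined to $F$ with no constraint coming from its color — and this is exactly why mere nonemptiness of each $E_i$ suffices, in contrast to \cref{lem:rgc}, where the cycle hypothesis was needed to ensure that an unrepresented member still contributes a usable edge extending the maximal rainbow forest.
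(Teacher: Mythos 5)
Your proof is correct, and it is exactly the argument the paper has in mind: the proposition is left as a trivial observation (the \qed with no proof), and the intended justification is precisely the maximal-rainbow-forest counting from \cref{thm:woc}/\cref{lem:rgc}, with edge-disjointness supplying the fresh edge of an unrepresented member, as you note.
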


The sharpness of the above is witnessed by a family of single edges forming a spanning tree. But there is a more general construction showing this.

\begin{definition}
A family $\EE$ of graphs is a \emph{linkleaf} if it is an empty family (which we consider as having a ground set of one vertex), or the family $\EE$ can be partitioned into three subfamilies $\EE_1, \{E\}, \EE_2$ such that $\EE_1$ and $\EE_2$ are two (possibly empty) vertex-disjoint linkleaves, and $E$ is a nonempty bipartite graph with respect to the bipartition $V(\union \EE_1), V(\union \EE_2)$.
\end{definition}

We prove below that this recursive construction is a characterization of families of $n$ edge-disjoint nonempty subgraphs of $K_{n+1}$ without any rainbow cycle.

\begin{theorem}\label{thm:linkleaf}
  For every family $\EE$ of $n$ edge-disjoint nonempty subgraphs of $K_{n+1}$, no rainbow cycle exists if and only if the family is a linkleaf.
\end{theorem}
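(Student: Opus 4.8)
The plan is to recast the no-rainbow-cycle condition combinatorially, then run an induction on $n$ whose step is a decomposition along a suitable vertex cut.

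\emph{Reformulation.} Since the $n$ members of $\EE$ are pairwise edge-disjoint and nonempty, choosing one edge from each member always yields $n$ distinct edges, i.e.\ a system of distinct representatives (SDR), and vice versa. As $n$ edges on $n+1$ vertices form a forest exactly when they form a spanning tree, one checks directly that $\EE$ has a rainbow cycle iff \emph{some} SDR fails to be a spanning tree of $K_{n+1}$; equivalently, $\EE$ has \emph{no} rainbow cycle iff \emph{every} SDR of $\EE$ is a spanning tree of $K_{n+1}$. The ``if'' direction of the theorem is then easy by induction on the recursive structure of a linkleaf: if $\EE = \EE_1 \union \set{E} \union \EE_2$ with $E$ bipartite with respect to $(V(\union\EE_1),V(\union\EE_2))$, a rainbow cycle uses at most one edge of $E$ (the only class meeting that cut), hence meets the cut an even number $\le 1$, i.e.\ $0$, times, so it lies entirely in $\union\EE_1$ or $\union\EE_2$, contradicting the inductive hypothesis.

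\emph{The ``only if'' direction.} Assume $\EE$ has no rainbow cycle and induct on $n$, the small cases being immediate. By the reformulation $\union\EE$ is connected and spanning. The heart of the matter is to produce a \emph{monochromatic cut}: a partition $V(K_{n+1}) = A \sqcup B$ with $A,B \neq \varnothing$ all of whose crossing edges in $\union\EE$ share a colour, say that of $E_i$. Given such a cut, fix an SDR spanning tree $T$; it has exactly one crossing edge, namely its unique edge of colour $i$, so $T\cap E(A)$ and $T\cap E(B)$ are spanning trees of $A$ and $B$, using precisely the colour classes contained in $E(A)$, resp.\ $E(B)$. A short representative-exchange (swap the representative of a colour $k \neq i$ to an edge of $E_k$ on the other side, contradicting that every SDR is a spanning tree) shows that every class other than $E_i$ lies inside $E(A)$ or inside $E(B)$, and that $E_i$ is \emph{exactly} the crossing set. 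Thus $\EE = \EE|_A \union \set{E_i} \union \EE|_B$, where $\EE|_A$ and $\EE|_B$ are families of $\abs A - 1$, resp.\ $\abs B - 1$, classes on $A$, resp.\ $B$, each still having the property that every SDR is a spanning tree; by induction they are linkleaves, $E_i$ is bipartite with respect to $(A,B)$, and so $\EE$ is a linkleaf.

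\emph{Producing a monochromatic cut.} Fix an SDR spanning tree $T$ with representatives $t_1,\dots,t_n$; for a non-tree edge $g$, swapping the representative of its colour $c(g)$ shows $t_{c(g)} \in C_T(g)$, its fundamental cycle. The fundamental cut of $t_i$ is monochromatic iff every non-tree $g$ with $t_i \in C_T(g)$ has colour $i$, so if this holds for some $i$ we are done; this already covers the case when $\union\EE$ is a tree or has a leaf. Otherwise pick, for each $i$, a non-tree edge $g_i$ with $t_i \in C_T(g_i)$ and $c(g_i) \neq i$, and take a directed cycle of the functional digraph $i \mapsto c(g_i)$; after relabelling it reads $t_1, \dots, t_p$ together with non-tree edges $h_\ell := g_\ell$ satisfying $c(h_\ell) = c(t_{\ell+1})$ (indices mod $p$, and $p \ge 2$ since $c(h_\ell)\neq c(t_\ell)$) and $\set{t_\ell, t_{\ell+1}} \subseteq C_T(h_\ell)$. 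Delete $t_1,\dots,t_p$ from $T$ and contract each of the $p+1$ resulting subtrees to a point: now $t_1,\dots,t_p$ form a spanning tree of the contracted multigraph and each $h_\ell$ is a chord whose path uses $t_\ell$ and $t_{\ell+1}$, so the $2p$ edges fall into $p$ colour classes of size two (that of $t_\ell$ being $\set{t_\ell, h_{\ell-1}}$), with no two equally-coloured edges parallel. Such a multigraph contains a cycle no colour of which repeats, and routing it through the contracted subtrees — whose edges avoid the colours $c(t_1),\dots,c(t_p)$ — lifts it to a rainbow cycle of $\union\EE$, a contradiction.

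\emph{Main obstacle.} The only step requiring real care is this last assertion: a connected multigraph on $p+1$ vertices with $2p$ edges forming $p$ colour classes of size two, no two equally-coloured edges parallel, must contain a cycle with no repeated colour (digons counted). Everything else is routine exchange arguments and bookkeeping. I have checked $p = 2$ (one always finds a rainbow digon) and $p = 3$ by hand; the general statement — which is essentially the theorem in the special case of colour classes of size exactly two — should have a short inductive or base-exchange proof, and is presumably where the authors' ``short proof'' does its work.
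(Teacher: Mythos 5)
Your overall architecture is exactly the paper's: reduce the ``only if'' direction to finding a monochromatic cut, then split along that cut and induct. Your reformulation via SDRs, your ``if'' direction, and your representative-exchange argument showing that a monochromatic cut forces the linkleaf decomposition are all correct and match what the paper does (its \cref{lem:cut} plus \cref{lem:edge-disjoint}). The gap is in the cut-producing step, and you have located it yourself: the claim that a connected multigraph on $p+1$ vertices with $2p$ edges in $p$ colour classes of size two, no class a parallel pair, must contain a rainbow cycle is not proved, only checked for $p=2,3$. This is not a peripheral technicality --- as you note, it is the theorem itself in the multigraph setting with classes of size exactly two, so leaving it at ``should have a short inductive or base-exchange proof'' reduces the theorem to an unproven special case of itself. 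Moreover, the extra structure you carry into the contracted multigraph (each chord $h_\ell$ has both $t_\ell$ and $t_{\ell+1}$ on its fundamental path, and $c(h_\ell)=c(t_{\ell+1})$) is not by itself enough to finish by the natural symmetric-difference argument: a cycle of the functional digraph $i\mapsto c(g_i)$ need not be a shortest circuit, so a tree edge $t_i$ may also lie on the fundamental cycle of some $h_j$ with $j\neq i-1,i$, in which case $C_T(h_1)\triangle\cdots\triangle C_T(h_p)$ can retain $t_i$ together with the equally coloured $h_{i-1}$ and fail to be rainbow.

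The paper closes precisely this gap. It builds the same auxiliary digraph on colours ($i\to j$ when some edge of $E_j$ crosses the fundamental cut of $e_i$; your non-monochromatic-cut hypothesis is exactly minimum out-degree $\ge 1$), but takes a \emph{minimum-length} circuit $1\to 2\to\dots\to k\to 1$ rather than an arbitrary cycle of a functional subgraph. Minimality forces $e_i\in O_j$ (the fundamental cycle of the chosen chord $f_j\in E_j$) exactly when $i\in\{j-1,j\}$, so that $O_1\triangle\cdots\triangle O_k$ contains every chord $f_1,\dots,f_k$, none of the tree edges $e_1,\dots,e_k$, and only tree edges otherwise; it is therefore a nonempty rainbow Eulerian edge set and contains a rainbow cycle. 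If you replace your functional-digraph cycle by a minimum circuit and finish with this symmetric-difference computation, your argument goes through; as written, the decisive combinatorial step is missing.
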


The main part of the proof consists of the following lemma.

\begin{lemma}\label{lem:cut}
  Let $\EE$ be a family of $n$ edge-disjoint nonempty subgraphs of $K_{n+1}$, where $n \ge 1$. If $\EE$ has no rainbow cycle then $\EE$ has a monochromatic cut, that is, a partition $V(\union \EE) = V_1 \cup V_2$ such that exactly one member of $\EE$ has an edge (or more) from $V_1$ to $V_2$.
\end{lemma}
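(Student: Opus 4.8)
\emph{Proof plan.} The starting point is that, under the hypothesis, the family behaves almost like a matroid union. Since each $E_i$ is nonempty, choosing one edge $e_i\in E_i$ for every $i$ gives an $\EE$-rainbow set of $n$ edges (the $e_i$ are distinct because the $E_i$ are pairwise edge-disjoint), and, having no rainbow cycle, this set must be a forest; a forest with $n$ edges inside $K_{n+1}$ is a spanning tree. So $V(\union\EE)=V(K_{n+1})$, and \emph{every} transversal $T=\set{e_1,\dots,e_n}$ of $\EE$ is a spanning tree. The same swap gives the key exchange property: if $f\in E_i$, then $(T\setminus\set{e_i})\cup\set f$ is again a rainbow set of $n$ edges, hence again a spanning tree, so $f$ joins the two components of $T-e_i$. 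Thus \emph{every edge of $E_i$ crosses the fundamental cut $\delta_T(e_i)$ obtained by deleting $e_i$ from $T$}. The plan is to prove that for some $i$ the cut $\delta_T(e_i)$ is monochromatic, i.e.\ no $E_j$ with $j\ne i$ has an edge crossing it; this is exactly the desired monochromatic cut (and $E_i$ does cross it, via $e_i$).

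Suppose not. Then for every $i$ I can pick an edge $g_i$, lying in some $E_{k(i)}$ with $k(i)\ne i$, that crosses $\delta_T(e_i)$; equivalently $e_i$ lies on the fundamental path $P_T(g_i)$ (the $T$-path between the endpoints of $g_i$), so the fundamental cycle $C_i:=P_T(g_i)\cup\set{g_i}$ runs through $e_i$, and each $g_i$ is necessarily a non-tree edge. The map $k\colon[n]\to[n]$ has no fixed point, so its functional digraph has directed cycles; among all legal choices of the $g_i$ I would fix one for which a shortest directed cycle $c_1\to c_2\to\dots\to c_m\to c_1$ (indices cyclic, $k(c_t)=c_{t+1}$, $m\ge2$) is as short as possible. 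Then I would examine the even subgraph $D:=C_{c_1}\triangle\dots\triangle C_{c_m}$ in the cycle space of $K_{n+1}$: each $g_{c_t}$ survives in $D$, and these have pairwise distinct colours $c_2,\dots,c_m,c_1$. If no tree edge $e_{c_s}$ survives in $D$, then all edges of $D$ have distinct colours, so any cycle inside the nonempty even subgraph $D$ is an $\EE$-rainbow cycle -- contradiction. Otherwise some $e_{c_s}$ survives, hence lies on an odd number of the paths $P_T(g_{c_t})$; but it lies on $P_T(g_{c_s})$ by choice and on $P_T(g_{c_{s-1}})$ because $g_{c_{s-1}}\in E_{c_s}$ and every edge of $E_{c_s}$ crosses $\delta_T(e_{c_s})$, so it lies on a third path $P_T(g_{c_t})$ with $t\not\equiv s-1,s\pmod m$ (in particular $m\ge3$). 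Then $g_{c_t}$ -- a non-tree edge of colour $c_{t+1}\ne c_s$ with $e_{c_s}$ on its fundamental path -- is a legal alternative choice for $g_{c_s}$, and rerouting $k$ at $c_s$ through $c_{t+1}$ produces a directed cycle of length $1+\big((s-t-1)\bmod m\big)\le m-1$, contradicting minimality. Either way a contradiction results, so some $\delta_T(e_i)$ is monochromatic.

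The delicate point is the last step: picking the right quantity to minimize (a shortest directed cycle over \emph{all} legal choice functions) and checking that the reroute strictly shortens it, which rests on the colour of $g_{c_t}$ being $c_{t+1}\ne c_s$ and on $g_{c_t}$ being a non-tree edge (so that $P_T(g_{c_t})$ is a genuine path and $g_{c_t}$ survives in $D$ exactly once). One must also use ``no rainbow cycle'' in the strong form ``every $\EE$-rainbow set is acyclic'', so that a cycle sitting inside the even subgraph $D$ is itself a rainbow cycle. I would deliberately avoid the alternative of inducting by contracting a leaf of $T$ or a monochromatic star, since contraction in $K_{n+1}$ can identify edges of different colours and destroy the edge-disjointness hypothesis; the spanning-tree/exchange argument above bypasses that issue.
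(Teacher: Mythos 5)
Your proposal is correct and is essentially the paper's own argument: a transversal of the edge-disjoint family is a rainbow spanning tree, the absence of a monochromatic cut yields a fixed-point-free "reconnection" digraph on the colors, and the symmetric difference of the fundamental cycles along a shortest directed cycle is a nonempty rainbow Eulerian subgraph, hence contains a rainbow cycle. The only (cosmetic) difference is that you obtain the cancellation of the tree edges $e_{c_s}$ by a rerouting contradiction against minimality over choice functions, whereas the paper gets it directly from the chordlessness of a minimum circuit in the full auxiliary digraph.
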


\begin{proof}[Proof of \cref{thm:linkleaf} assuming \cref{lem:cut}]
  The ``if'' direction can easily be verified from the construction. For the ``only if'' direction we use induction. The base case $n=0$ is trivial. Let $\EE$ be a family of $n \ge 1$ edge-disjoint nonempty subgraphs of $K_{n+1}$ without any rainbow cycle. By \cref{lem:cut}, there exists a partition of $V(K_{n+1})$ into $V_1$, say of size $k+1$, and $V_2$, say of size $\ell + 1$, where $k+\ell = n-1$, and a unique member $E$ of $\EE$ having an edge or more from $V_1$ to $V_2$. Since $\EE$ has no rainbow cycle, by \cref{lem:edge-disjoint}, at most $k$ of the subgraphs have an edge or more in $V_1$ and at most $\ell$ of them have an edge or more in $V_2$. Because the total number of members of $\EE$ is $k+\ell + 1$, exactly $k$ of them are contained in $V_1$, exactly $\ell$ of them are contained in $V_2$, and $E$ has only edges from $V_1$ to $V_2$. It follows from the induction hypothesis that $\EE$ is a linkleaf.
\end{proof}

\begin{proof}[Proof of \cref{lem:cut}]
  Assume for the sake of contradiction that the family $\EE := \set{E_1, \dots, E_n}$ has neither a rainbow cycle nor a monochromatic cut. Pick an arbitrary edge $e_i$ from $E_i$ for each $i$, and let $T$ be the rainbow set $\set{e_1, \dots, e_n}$. Since $T$ contains no cycle, $T$ must be a rainbow spanning tree.

  We form a digraph $D$ with vertex set $[n]$, in which an arrow goes from $i$ to $j$, for $i\neq j$, if some edge of $E_j$ reconnects $T \setminus \set{e_i}$. Due to the nonexistence of monochromatic cuts in the family, for every $i$, some edge in $E_j$, for some $j \neq i$, reconnects $T \setminus \set{e_i}$. Thus the minimum out-degree of $D$ is at least $1$.

  Without loss of generality, let $1\to 2\to \dots \to k\to 1$ be a minimum circuit in $D$. As such, let $f_i$ be an edge in $E_i$ that reconnects $T \setminus \{e_{i-1}\}$, for each $i \in [k]$, under the convention that $e_0 := e_k$. Write $O_i$ for the unique cycle formed by adding $f_i$ to $T$. Certainly $e_{i-1}$ is in $O_i$, and moreover $e_i$ is in $O_i$ as $O_i$ cannot be rainbow. By the minimality of the circuit, for each $i, j \in [k]$ with $i \neq j$ and $i \neq j-1 \pmod k$, we have $i \not\to j$ in $D$, which means that $f_j$ does not reconnect $T \setminus \set{e_i}$, and so $e_i \not\in O_j$.

  To summarize, for each $i, j \in [k]$, $e_i \in O_j$ if and only if $i = j$ or $i = j-1 \pmod k$. Set $O := O_1 \bigtriangleup \dots \bigtriangleup O_k$, where $\bigtriangleup$ stands for symmetric difference. Note that
  \[
    \set{f_1, \dots, f_k} \subseteq O \subseteq (T \setminus \set{e_1, \dots, e_k}) \cup \set{f_1, \dots, f_k}.
  \]
  By the first inclusion $O$ is nonempty, and by the second inclusion, it is rainbow. Since every vertex has even degree in $O$, it contains a rainbow cycle.
\end{proof}

\begin{remark}
  It has come to our attention that \cref{thm:linkleaf} already appeared in \cite{HHJO}, and very recently it was generalized for binary matroids by B\'erczi and Schwarcz~\cite{BS}. We still include our proof as it is  elementary and transparent, and moreover it can be easily adapted for binary matroids. In the adapted proof, the binary-ness of the matroids is only needed in the last step of \cref{lem:cut} to show $O$, a symmetric difference of circuits, is a disjoint union of circuits. The rest of our argument works over arbitrary matroids.
\end{remark}

\section{Concluding remarks}\label{sec:remarks}

\subsection{Rainbow spanning in matroids}

\cref{thm:woc} can be seen as a special case of the following rainbow result for matroids.

\begin{proposition}\label{thm:matroid-spanning}
  Let $M$ be a matroid of rank $n$ and let $e$ be an element in the ground set $E$ of $M$. For every family $\set{A_1, \dots, A_n}$ of subsets of $E$, if each $A_i$ contains $e$ in its closure, then the family has a rainbow set that contains $e$ in its closure.
\end{proposition}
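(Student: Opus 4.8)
The plan is to reduce the statement to a rainbow-spanning-set problem in a related matroid and then invoke Rado's theorem, exactly as in the proof of \cref{lem:rainbow-spanning-tree}. First, if $e$ lies in the closure of the empty set (i.e. $e$ is a loop), every $A_i$ trivially witnesses the conclusion, so assume $e$ is not a loop. The hypothesis that $e \in \mathrm{cl}(A_i)$ means precisely that $\rank{A_i \cup \set{e}} = \rank{A_i}$, equivalently that there is an independent set inside $A_i$ that spans $e$; a convenient reformulation is to contract $e$. Work in the contraction $M/e$, which has rank $n-1$. In $M/e$, the set $A_i$ has rank $\rank{A_i \cup \set e} - 1$, while in $M$ itself $A_i$ contains $e$ in its closure iff $\mathrm{cl}_{M/e}(A_i) = \mathrm{cl}_M(A_i \cup \set e)/e$. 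The upshot I would establish is: a rainbow set $R$ (with injection $\sigma$) has $e \in \mathrm{cl}_M(R)$ if and only if the image of $R$ in $M/e$ has rank equal to $\rank{R} $ in $M$ minus... — more cleanly, $e\in\mathrm{cl}_M(R)$ iff $R$ contains a rainbow subset that is a basis of the line/flat through $e$, and this is governed by $\rank{M/e}$-considerations.

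Rather than fight the bookkeeping, here is the cleaner route I would actually take. Adjoin nothing; instead observe that $e\in\mathrm{cl}(A_i)$ means $A_i$ has full rank in the restriction matroid $M|_{\mathrm{cl}(e)}$? No — $\mathrm{cl}(e)$ is just the set of loops plus $e$. The right object is: for each $i$, choose a minimal subset $B_i \subseteq A_i$ with $e \in \mathrm{cl}(B_i)$; then $B_i \cup \set e$ contains a circuit through $e$, so $B_i$ is independent and $\rank{B_i\cup\set e} = \abs{B_i}$. Now I claim the family $\set{A_1,\dots,A_n}$ has a rainbow set containing $e$ in its closure iff it has a rainbow independent set $R$ of size $n$ in the matroid $M' := M/e$ restricted appropriately — precisely, apply Rado's theorem to $\set{A_1,\dots,A_n}$ in the matroid $M/e$: we get a rainbow independent-in-$M/e$ transversal of size $n-1$ provided $\rank{M/e}{A_I} \ge \abs I - 1$... but we want size $n$, which is impossible since $\mathrm{rank}(M/e) = n-1$. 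So the statement is really about getting $n-1$ genuinely new elements plus using $e$ implicitly.

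So the final plan: apply Rado's theorem for matroids (quoted in the excerpt) to the matroid $M/e$ of rank $n-1$ and the family $\set{A_1,\dots,A_n}$ of $n$ subsets of its ground set. I must verify the rank condition $\rank[M/e]{A_I} \ge \abs I$ fails in general — indeed it must, since the whole ground set has rank $n-1 < n$. Hence instead I apply Rado to the sub-family obtained after one deletion, or better: pick any $i_0$, use $A_{i_0} \ni$ a set $B_{i_0}$ spanning $e$, fix one element $b \in B_{i_0}$, and run Rado on $M/b$... This is getting circular, so let me commit to the honest approach. \textbf{Final approach:} induct on $n$, mirroring \cref{thm:woc}. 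Let $R$ be a maximal rainbow independent subset of $E$ with $e\notin\mathrm{cl}(R)$; such $R$ exists (take $R=\varnothing$) and $\abs R \le n-1$ since $\mathrm{rank}(M)=n$ and $e\notin\mathrm{cl}(R)$ forces $\abs R < n$ only if ... actually $\abs R$ could be up to $n-1$. If $\abs R < n$ then some $A_i$ is unrepresented; since $e\in\mathrm{cl}(A_i)$ but $e\notin\mathrm{cl}(R)$, there is $a\in A_i$ with $a\notin\mathrm{cl}(R)$ and $e\in\mathrm{cl}(R\cup\set a)$ — otherwise $\mathrm{cl}(R) \supseteq A_i \ni$ a spanning set of $e$, contradiction. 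Then $R\cup\set a$ is rainbow, independent, represents $A_i$, and contains $e$ in its closure; if it did not we would have enlarged $R$. This is exactly the \cref{thm:woc} argument with ``forest/component'' replaced by ``closure'', and the main (minor) obstacle is checking the existence of such an $a$, i.e. that $\mathrm{cl}(R) \not\supseteq A_i$ when $e \in \mathrm{cl}(A_i)\setminus\mathrm{cl}(R)$, which is immediate since closure is monotone. No deep input beyond the maximality trick is needed.
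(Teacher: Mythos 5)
Your final committed argument --- take a maximal rainbow independent set $R$ with $e\notin\mathrm{cl}(R)$, note $\abs{R}\le n-1$ so some $A_i$ is unrepresented, pick $a\in A_i\setminus\mathrm{cl}(R)$ (which exists since otherwise $e\in\mathrm{cl}(A_i)\subseteq\mathrm{cl}(R)$), and conclude $e\in\mathrm{cl}(R\cup\set{a})$ by maximality --- is exactly the paper's proof, which likewise mirrors \cref{thm:woc}. The Rado/contraction detours you abandon along the way are not needed, and your explicit handling of the case where $e$ is a loop is a harmless (if anything, slightly more careful) addition.
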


\begin{proof}
  Let $R$ be a maximal rainbow set such that $R \cup \set{e}$ is independent in $M$. If $e \in R$ we are done, so assume $e \not\in R$. As the rank of $M$ is $n$, we know that $\abs{R} < n$, and hence some $A_i$ is not represented in $R$. Denote by $\mathrm{span}(\cdot)$ the closure operator in $M$. Since $e \in \mathrm{span}(A_i) \setminus \mathrm{span}(R)$, there exists $a \in A_i \setminus \mathrm{span}(R)$. The set $R' := R \cup \set{a}$ is then a rainbow independent set, and by the maximality of $R$, we have that $R' \cup \set{e}$ is dependent, which implies $e \in \mathrm{span}(R')$.
\end{proof}

To see that \cref{thm:woc} follows from \cref{thm:matroid-spanning}, note that for every edge set $O$ whose vertex set is contained in $[n]$, $O$ contains an odd cycle if and only if $e_0 \in \mathrm{span}(A)$, where $e_0, e_1, \dots, e_n$ form the standard basis of $\F_2^{n+1}$ and $A := \dset{e_0 + e_i + e_j}{\set{i,j}\in O}$. This observation allows us to go back and forth between odd cycles in $K_n$ and subsets of $E$ that contain $e_0$ in their closures, where
\[
  E := \dset{(x_0, x_1, \dots, x_n) \in \F_2^{n+1}}{x_1 + \dots + x_n = 0}
\]
is the ground set of a binary matroid of rank $n$.

\subsection{Rainbow even cycles}

Perhaps surprisingly, the analog of \cref{thm:woc} and \cref{lem:rgc} for even cycles is false. \cref{fig:no-rainbow-even-cycles} shows a family of $6$ squares ($4$-cycles) on $6$ vertices without a rainbow even cycle. By gluing copies of this construction, so that every new copy shares one vertex with the union of the previous ones, we get a family of roughly $6n/5$ squares on $n$ vertices without a rainbow even cycle.

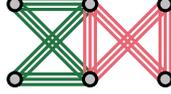
\begin{figure}
  \centering
  \begin{tikzpicture}[scale=1, very thick]
    \coordinate (a1) at (0,0);
    \coordinate (a2) at (0,1);
    \coordinate (a3) at (1,0);
    \coordinate (a4) at (1,1);
    \coordinate (a5) at (2,0);
    \coordinate (a6) at (2,1);

    \draw[green] (a1) -- (a4);
    \draw[green, transform canvas={xshift=1.414pt, yshift=-1.414pt}] (a1) -- (a4);
    \draw[green, transform canvas={xshift=-1.414pt, yshift=1.414pt}] (a1) -- (a4);

    \draw[green] (a2) -- (a3);
    \draw[green, transform canvas={xshift=1.414pt, yshift=1.414pt}] (a2) -- (a3);
    \draw[green, transform canvas={xshift=-1.414pt, yshift=-1.414pt}] (a2) -- (a3);

    \draw[green] (a1) -- (a3);
    \draw[green, transform canvas={yshift=-2pt}] (a1) -- (a3);
    \draw[green, transform canvas={yshift=2pt}] (a1) -- (a3);

    \draw[green] (a2) -- (a4);
    \draw[green, transform canvas={yshift=-2pt}] (a2) -- (a4);
    \draw[green, transform canvas={yshift=2pt}] (a2) -- (a4);

    \draw[red] (a3) -- (a6);
    \draw[red, transform canvas={xshift=1.414pt, yshift=-1.414pt}] (a3) -- (a6);
    \draw[red, transform canvas={xshift=-1.414pt, yshift=1.414pt}] (a3) -- (a6);

    \draw[red] (a4) -- (a5);
    \draw[red, transform canvas={xshift=-1.414pt, yshift=-1.414pt}] (a4) -- (a5);
    \draw[red, transform canvas={xshift=1.414pt, yshift=1.414pt}] (a4) -- (a5);

    \draw[red] (a3) -- (a4);
    \draw[red, transform canvas={xshift=-2pt}] (a3) -- (a4);
    \draw[red, transform canvas={xshift=2pt}] (a3) -- (a4);

    \draw[red] (a5) -- (a6);
    \draw[red, transform canvas={xshift=-2pt}] (a5) -- (a6);
    \draw[red, transform canvas={xshift=2pt}] (a5) -- (a6);

    \foreach \i in {1,2,3,4,5,6}
        \node[vtx] at (a\i) {};
  \end{tikzpicture}
  \caption{A family of $6$ squares on $6$ vertices without any rainbow even cycle.}\label{fig:no-rainbow-even-cycles}
\end{figure}

To get an upper bound on the number of even cycles needed to guarantee a rainbow even cycle, we observe that each connected component of a graph without even cycles is a cactus graph.\footnote{Indeed, if two odd cycles share two vertices, this gives rise to an even cycle.} Note that the densest cactus graph on $n$ vertices is a triangular cactus graph\footnote{A cactus graph is triangular when every cycle in it is a triangle.} (with one bridge if $n$ is even). Thus the maximum number of edges in a graph on $n$ vertices without even cycles is $\lfloor{3(n-1)/2}\rfloor$. From \cref{lem:naive} we have the following rainbow result.

\begin{proposition}\label{lem:even-cycles}
  Every family of $\floor{3(n-1)/2}+1$ even cycles in $K_n$ has a rainbow even cycle.\qed
\end{proposition}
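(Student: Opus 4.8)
The plan is to derive Proposition~\ref{lem:even-cycles} directly from Proposition~\ref{lem:naive} by identifying the right ground set, property, and extremal parameter $m$. Here the ground set is $E := E(K_n)$, and the property is $\PP := \dset{F \subseteq E(K_n)}{F \text{ contains an even cycle}}$. One checks immediately that $\varnothing \notin \PP$ and that $\PP$ is closed upwards (adding edges to a graph that already contains an even cycle keeps an even cycle). Applying Proposition~\ref{lem:naive} with $m := \max\dset{\abs{F}}{F \subseteq E(K_n) \text{ and } F \text{ has no even cycle}}$, we get that any family of $m+1$ even cycles in $K_n$ (each of which is a member of $\PP$) has a rainbow set in $\PP$; but a rainbow set in $\PP$ is a rainbow subgraph containing an even cycle, hence a rainbow even cycle after discarding superfluous edges. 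So the entire content of the proof is the extremal computation $m = \floor{3(n-1)/2}$.

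Next I would establish the structural fact that a graph with no even cycle has every connected component a cactus graph. This is exactly the footnote argument: if some component contained two cycles sharing at least two vertices, say sharing vertices $x$ and $y$, then these two cycles each decompose into two internally disjoint $x$--$y$ paths; among the (up to) four resulting $x$--$y$ paths, two must have the same parity of length, and their union — after trimming shared initial or final segments if necessary — contains an even closed walk, hence an even cycle, a contradiction. (One must be slightly careful that the two chosen paths are distinct and internally disjoint so that their union really is a cycle, but this can always be arranged by taking $x,y$ to be the endpoints of a maximal shared subpath.) Thus a graph on $n$ vertices with no even cycle is a disjoint union of cactus graphs in which, moreover, every cycle is odd.

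Then I would compute the maximum number of edges. A cactus graph on $t$ vertices with $c$ cycles has exactly $t - 1 + c$ edges (it is connected with cycle rank $c$, or one sees this by building it up cycle by cycle), and since all cycles are odd, each cycle uses at least $3$ vertices while cycles are pairwise almost-disjoint; the densest such configuration on $t$ vertices is the triangular cactus, where every cycle is a triangle and consecutive triangles share a single vertex. A triangular cactus on $t$ vertices has $(t-1)/2$ triangles when $t$ is odd, giving $t - 1 + (t-1)/2 = 3(t-1)/2$ edges; when $t$ is even one vertex cannot be covered by triangles, so the best is $(t-2)/2$ triangles plus one bridge edge, giving $3(t-2)/2 + 1 = (3t-4)/2 = \floor{3(t-1)/2}$ edges. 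In both parities this equals $\floor{3(t-1)/2}$. Summing over components $t_1 + \dots + t_s = n$ and using concavity/superadditivity of $t \mapsto \floor{3(t-1)/2}$ (each component "wastes" the $-1$, so one large component beats several small ones), the maximum over all $n$-vertex graphs with no even cycle is attained by a single triangular cactus component spanning all $n$ vertices, yielding $m = \floor{3(n-1)/2}$. Feeding this value into Proposition~\ref{lem:naive} gives the claimed bound $\floor{3(n-1)/2}+1$.

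I expect the only mildly delicate point to be the even-closed-walk-to-even-cycle reduction in the structural step (ensuring one genuinely extracts an even \emph{cycle}, not merely an even closed walk, and that the two paths chosen form an honest cycle), together with the bookkeeping that $\floor{3(t-1)/2}$ is both the per-component maximum and superadditive enough that a single spanning component is optimal; neither is hard, but both deserve a sentence. Everything else is a direct invocation of Proposition~\ref{lem:naive}.
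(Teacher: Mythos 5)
Your proof follows exactly the paper's route: invoke Proposition~\ref{lem:naive} with $\PP$ the property of containing an even cycle, observe that a graph with no even cycle has all components cacti (the paper's footnote argument, which is the same two-cycles-sharing-two-vertices parity argument you sketch), and compute that the densest such graph is a triangular cactus with $\floor{3(n-1)/2}$ edges. The argument is correct, and your extra care about extracting a genuine even cycle (via an ear of one cycle attached to the other) and about the per-component bound being superadditive only fills in details the paper leaves implicit.
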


This upper bound is not sharp: for example, $4$ even cycles on $4$ vertices always have a rainbow even cycle.\footnote{The upper bound becomes sharp if the family is allowed to consist of digons (cycles of length $2$). This is seen by taking a triangular cactus graph on $n$ vertices with $\lfloor 3(n-1)/2 \rfloor$ edges, and using one digon for each of its edges. Strictly speaking, however, this takes us from graphs to multigraphs.} We leave the determination of the exact number needed in general (between roughly $6n/5$ and $3n/2$) as an open problem.

\section*{Acknowledgements}

We acknowledge the financial support from the Ministry of Educational and Science of the Russian Federation in the framework of MegaGrant no.\ 075-15-2019-1926 when the first author worked on \cref{sec:roc,sec:rgc} of the paper. We thank the anonymous referees for the suggestions which improved the clarity of the paper.

\bibliographystyle{plain}
\bibliography{cactus}

\end{document}